\newtheorem{theorem}{Theorem}
\newtheorem{lemma}[theorem]{Lemma}
\newtheorem{definition}[theorem]{Definition}
\newtheorem{corollary}[theorem]{Corollary}
\newtheorem{proposition}[theorem]{Proposition}
\newtheorem{problem}[theorem]{Problem}
\begin{document}

\title{The structure of graphs with Circular flow number
5 or more, and the complexity of their recognition problem}

\author{L. Esperet \footnote{Laboratoire G-SCOP (Grenoble-INP, CNRS),
    Grenoble, France. Partially supported by ANR Project Heredia
  (\textsc{anr-10-jcjc-0204-01}), ANR Project Stint
  (\textsc{anr-13-bs02-0007}), and LabEx PERSYVAL-Lab
  (\textsc{anr-11-labx-0025}).} \and
  G. Mazzuoccolo \footnote{Universit\`{a} di Modena e Reggio Emilia, Dipartimento di Scienze Fisiche, Informatiche e Matematiche, Italy e-mail: {mazzuoccolo@unimore.it}}
    \and M. Tarsi \footnote{The Blavatnik School of Computer Science,
    Tel Aviv University, Israel e-mail: {tarsi@post.tau.ac.il}}}

\maketitle

\begin{abstract} \noindent
For some time the Petersen graph has been the only known Snark
with circular flow number $5$ (or more, as long as the assertion
of Tutte's $5$-flow Conjecture is in doubt). Although infinitely
many such snarks were presented eight years ago  in \cite{mr}, the
variety of known methods to construct them and the structure of
the obtained graphs  were still rather limited. We start this
article with  an analysis of sets of flow values, which can be
transferred through flow networks  with the flow on each edge
restricted to the open interval $(1,4)$ modulo $5$. All these sets
are symmetric unions of open integer intervals in the ring
$\mathbb{R}/5\mathbb{Z}$. We use the results to design an arsenal
of methods for constructing snarks $S$  with circular flow number
$\phi_c(S)\ge 5$. As one indication to the diversity and density
of the obtained family of graphs, we show that it is sufficiently
rich so that the corresponding recognition problem is NP-complete.
\medskip

\noindent \emph{Keywords:} Snarks, Circular flows, Nowhere-zero
flows, NP-Completeness.
\end{abstract}

\section{Introduction}

For an integer $k$, a \emph{nowhere-zero $k$-flow} in a graph $G$
is a flow in some orientation of $G$, such that the flow value on
each arc is in $\{1,2,\ldots,k-1\}$. In this paper, we will be
mainly interested in the following relaxation of nowhere-zero
flows: for some real $r$, a \emph{circular nowhere-zero $r$-flow}
in a graph $G$ is a flow in some orientation of $G$, such that the
flow value on each arc is in $[1,r-1]$. The \emph{circular flow
number} of a graph $G$ is the infimum of the reals $r$ such that
$G$ has a circular nowhere-zero $r$-flow (more detailed definitions
 are presented in Section~\ref{sec:cnzf}). It was conjectured by
Tutte that any 2-edge-connected graph has a nowhere-zero
5-flow~\cite{5flow}, and that any 4-edge-connected graph has a
nowhere-zero 3-flow~\cite{Tut66}. A stronger conjecture of
Jaeger~\cite{Jae88} asserts that for any integer $k\ge 1$, any
$4k$-edge-connected graph has circular flow number at most
$2+\tfrac1k$. These conjectures are wide open.

A particularly interesting class of graphs for studying the
interplay between edge-connectivity and flows is the class of
\emph{snarks}, which are cyclically 4-edge-connected cubic graphs
of girth at least 5, with no 3-edge-coloring (equivalently, with
no circular nowhere-zero 4-flow). Because a snark is
well-connected, one might expect that it has circular flow number
less than 5. The Petersen graph shows that it is not the case,
however it was conjectured by Mohar in 2003 that it is the only
counterexample~\cite{mohar}. The conjecture was refuted in 2006 by
M\'a\v cajov\'a and Raspaud~\cite{mr}, who constructed an infinite
family of snarks with circular flow number 5.

The purpose of this paper is to show that the family $S_{\ge 5}$ of
snarks with circular flow number at least 5 is significantly richer
than just the set of graphs constructed by M\'a\v cajov\'a and
Raspaud. Observe that if a graph $G$ has circular flow number less
than some value $k$, then $G$ admits a flow such that every arc has
flow value in the open interval $(1,k-1)$. It is known to be
equivalent to the fact that $G$ has a \emph{modulo $k$} flow such that
in some orientation of $G$, every arc has flow value in
$(1,k-1)$. With this observation in mind, we will make a systematic
study of the set of modulo $k$ flow values that can be transferred
through a two terminal network, where the flow on each edge is
restricted to $(1,k-1)$. In Section \ref{sec:ap}, we will show that
for any two terminal network, this set of flow values is a symmetric
union of open integer intervals in the ring $\mathbb{R}/k\mathbb{Z}$
(more will be said about these unions of intervals in Section
\ref{sec:al}). Moreover, the set $\mbox{\it GI}_k$ of unions of
intervals that can be obtained in this way is closed under addition
and intersection.

In Section \ref{>=5} we will focus on the case $k=5$. Using ideas
developed in the previous sections, we construct a certain set of
two terminal networks allowing us to generate most of $\mbox{\it
  GI}_5$. In Section \ref{explicit}, these networks will be pieced
together in several ways, allowing us to design a variety of
methods for the construction of graphs (in particular snarks) of
circular flow number at least $5$. These constructions are not
independent, so the same graphs can be obtained in several
different ways (more will be said about the redundancy of our
constructions in Section~\ref{sec:red}). Consequently, it is at
first sight unclear how rich the constructed family is.

In Section \ref{sec:npc}, we will make this clear by showing that
deciding whether a snark has circular flow number less than 5 is an
NP-complete problem (and so the recognition problem for $S_{\ge 5}$ is
co-NP-complete). We also generalize that result to any circular flow
number $r$, $r \in (4,5]$. The proof uses the tools developed in
  Section \ref{>=5}. Many other nowhere zero flow related problems are known to be
  NP-hard:
  deciding the existence of nowhere zero 3-flow is NP-complete even if restricted to planar graphs
   (where it is equivalent to 3-coloring of the dual graph); deciding the existence of nowhere
    zero 4-flow is NP-complete also when restricted to cubic graphs (where it is equivalent to
    3-edge-coloring);
   when considering nowhere zero 5-flows, it was proved by Kochol~\cite{Koc98} that if Tutte's
    5-flow conjecture
  (mentioned above) is wrong, then deciding whether a cubic graph has
  a nowhere-zero 5-flow is an NP-complete problem. It was also proved
  in~\cite{EMOP13} that for any $t$ and $k$, either all $t$-edge-connected planar
  (multi) graphs have circular flow number at most $2+\tfrac1k$, or
  deciding whether a $t$-edge-connected planar (multi) graph has
  circular flow number at most $2+\tfrac1k$ is an NP-complete problem
  (this is related to the conjecture of Jaeger mentioned
  above).

  Our result, however, is significantly different. Only a limited family of snarks with flow
   number 5 (or more)
  where so far known and here we show that the family of such graphs is rich enough to yield
  NP-hardness. Our result
  does not rely on any unsettled conjecture. If Tutte's conjecture indeed holds, then this is
  the family of snarks whose flow number is  precisely 5.  Another unique aspect of our proof
   is the need to restrict the construction to actually provide
   snarks, unlike other proofs which strongly rely on graphs with
   smaller girth and connectivity.

 In Section \ref{sec:ccl}, we conclude with some open problems and suggestions for further
  research.

\section{The algebra of symmetric unions of open integer
intervals in $\mathbb{R}/k\mathbb{Z}$}\label{sec:al}

\begin{definition} Let $r$ be a positive real number. $\mathbb{R}/r\mathbb{Z}$ denotes the ring of real
numbers modulo $r$. $\mathbb{R}/r\mathbb{Z}$ is commonly
represented by a cycle of length $r$ where the numbers of the real
interval [0,r) are cyclically ordered clockwise  from $0$ to
$r=0$. An interval $(a,b)$ on $\mathbb{R}/r\mathbb{Z}$ refers to
the set of numbers covered  when traversing clockwise from $a$ to
$b$. Closed and half closed intervals are similarly defined. It
follows, for example, that
 $0\in (a,b)$ if and only if, when
referred to as real numbers, $a>b$. Also the union of the disjoint
intervals [a,b) and [b,a) is always the entire cycle $\mathbb{R}/r\mathbb{Z}$.
Traversing clockwise from $a$ to $b$ is ambiguous when $a=b$. We
resolve this by taking the long way, so $(a,a)=\mathbb{R}/r\mathbb{Z} - \{a\}$ (and
not the empty set).
\end{definition}

The following relates to $\mathbb{R}/k\mathbb{Z}$  where $k$ is a
positive integer.

\begin{definition}An {\bf integer open interval} of $\mathbb{R}/k\mathbb{Z}$ is any interval
$(a,b)$  where $a$ and $b$ are (not necessarily distinct)
integers. There are clearly $k^2$ such intervals with $a,b \in
\{0,1,...,k-1\}$. The set of all integer intervals of
$\mathbb{R}/k\mathbb{Z}$ is denoted here by $I_k$.
\end{definition}

\begin{definition}
A set $A \in \mathbb{R}/k\mathbb{Z}$ is {\bf symmetric} if and only if \\ $a \in A
\Leftrightarrow -a(=k-a) \in A$.
\end{definition}
\begin{definition}
Let $\mbox{\it SI}_k$ denote the set of all unions of subsets of
$I_k$  which form symmetric subsets of $\mathbb{R}/k\mathbb{Z}$.
\end{definition}

For example, $0$ and $1$ are the only integers in $\mathbb{R}/2\mathbb{Z}$ and
accordingly, $I_2=\{(0,0),(0,1),(1,0),(1,1)\}$ and
\[\mbox{\it SI}_2=\{\emptyset,~(0,1)\cup(1,0),~(0,0),~(1,1),~\mathbb{R}/2\mathbb{Z}\}\]

All four non-empty sets in $\mbox{\it SI}_2$ contain every
non-integer and they only differ by a different subset of
$\{0,1\}$  that each of them contains. Starting with $|\mbox{\it
SI}_1|=3$ and $|\mbox{\it SI}_2|=5$, it is not hard to verify that
$|\mbox{\it SI}_k|$ are Fibonacci numbers. Coming next are
$|\mbox{\it SI}_3|=8$, $|\mbox{\it SI}_4|=13$ and $|\mbox{\it
SI}_5|=21$.

\begin{proposition}
\label{algebra} $\mbox{\it SI}_k$ is clearly closed under the following set
operations:
\begin{itemize}
\item {\bf Addition}, defined by $A+B=\{a+b\,|\,a\in A \mbox{ and }b\in B\}$.
As the involved sets are symmetric, addition can be replaced by
subtraction (yet $A+A=A-A$ is neither $``0"$, nor empty and
addition is not invertible);
\item Set {\bf Intersection}, $A\cap B$;
\item Set {\bf Union}, $A\cup B$;
\item {\bf Open complement}. The open complement of an open set
$A$ is the complement of its closure $\sigma(A)$, that is,
$\overline{\sigma(A)}=\mathbb{R}/k\mathbb{Z} - \sigma(A)$.
Following that definition, $A \cup \overline{\sigma(A)}$ is
generally not the entire cycle, but lacks some integers.
\end{itemize}

In what follows  addition and intersection play the major role.
\end{proposition}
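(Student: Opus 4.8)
The plan is to reduce every member of $\mathit{SI}_k$ to a finite combinatorial object and read each closure property off it. I would decompose the cycle $\mathbb{R}/k\mathbb{Z}$ into $2k$ \emph{atoms}: the $k$ integer points $\{0,1,\dots,k-1\}$ and the $k$ open unit segments $(i,i+1)$. Every integer open interval $(a,b)$ is exactly a cyclically contiguous run of atoms whose two extreme atoms are open segments (its endpoints $a,b$ being excluded). Conversely, a union of atoms is a union of integer open intervals if and only if it is \emph{integer-protected}, meaning an integer atom $j$ is present only when both neighbouring segments $(j-1,j)$ and $(j,j+1)$ are present; otherwise a maximal run of atoms would terminate at an integer, which no integer open interval does. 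The first step is therefore the lemma: $A\in\mathit{SI}_k$ if and only if $A$ is a symmetric, integer-protected union of atoms. I would also record that $\emptyset$ and the whole cycle lie in $\mathit{SI}_k$, the latter as $\bigcup_i (i,i+2)$.

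With this lemma, union and intersection are immediate. If $A,B$ are unions of atoms, so are $A\cup B$ and $A\cap B$; symmetry is preserved by both; and integer-protection passes to the union (if $j$ is protected in $A$ or in $B$ it is protected in $A\cup B$) and to the intersection (if $j\in A\cap B$ then its two neighbours lie in $A$ and in $B$, hence in $A\cap B$). For the open complement, note that $\sigma(A)$ adjoins to $A$ precisely the integer atoms adjacent to a segment atom of $A$, so $\sigma(A)$ is again a union of atoms and $C:=\mathbb{R}/k\mathbb{Z}-\sigma(A)$ is the union of the complementary atoms. Then $C$ is symmetric, and it is integer-protected: an integer atom $j\in C$ forces $(j-1,j),(j,j+1)\notin A$, so both neighbours lie in $C$. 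Hence $C\in\mathit{SI}_k$.

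Addition is the one operation that needs genuine work, because the Minkowski sum can wrap around the cycle. Since $A+B=\bigcup_{i,j}(I_i+J_j)$ when $A=\bigcup_i I_i$ and $B=\bigcup_j J_j$ are decomposed into integer open intervals, and a finite union of $\mathit{SI}_k$-sets is again in $\mathit{SI}_k$ by the union case, it suffices to treat a single pair of intervals. Writing them as $a_1+(0,\ell_1)$ and $a_2+(0,\ell_2)$ with integer lengths $\ell_1,\ell_2$ (taking $\ell=k$ when the two endpoints coincide), the sum is $\{\,a_1+a_2+u : u\in(0,\ell_1+\ell_2)\,\}$ reduced modulo $k$, since the Minkowski sum of two open real intervals of lengths $\ell_1,\ell_2$ is an open interval of length $\ell_1+\ell_2$. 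I would then split into three cases: if $\ell_1+\ell_2<k$ the sum is the integer open interval $(a_1+a_2,b_1+b_2)$; if $\ell_1+\ell_2=k$ it is $\mathbb{R}/k\mathbb{Z}-\{a_1+a_2\}=(a_1+a_2,a_1+a_2)\in I_k$; and if $\ell_1+\ell_2>k$ it is the full cycle. In every case the result lies in $\mathit{SI}_k$, and since $-(a+b)=(-a)+(-b)$, symmetry of $A,B$ forces symmetry of $A+B$. Finally, as each $B\in\mathit{SI}_k$ is symmetric we have $-B=B$, so $A-B=A+(-B)=A+B$, which justifies the remark that subtraction may replace addition.

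The main obstacle is the wraparound bookkeeping in the addition case, in particular the two boundary regimes $\ell_1+\ell_2=k$ and $\ell_1+\ell_2>k$, where the naive ``interval plus interval equals interval'' rule breaks down and one must verify that the degenerate outputs $(a_1+a_2,a_1+a_2)$ and the full cycle are themselves in $\mathit{SI}_k$. The remaining three operations are essentially routine once the atom lemma is established, and the lemma itself is the only structural input the whole proposition requires.
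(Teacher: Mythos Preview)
Your argument is correct, and it supplies far more than the paper does: the paper offers no proof at all---the word ``clearly'' in the statement is the entire justification, and Proposition~\ref{algebra} is recorded as a self-evident observation. Your atom-decomposition lemma (a subset of $\mathbb{R}/k\mathbb{Z}$ lies in $\mathit{SI}_k$ iff it is a symmetric, integer-protected union of atoms) is a clean device that makes union, intersection, and open complement routine, and your wraparound case analysis for the Minkowski sum is exactly the content hidden behind ``clearly''.

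One small phrasing slip in the addition paragraph: you write that ``a finite union of $\mathit{SI}_k$-sets is again in $\mathit{SI}_k$ \dots\ it suffices to treat a single pair of intervals'' and then conclude ``in every case the result lies in $\mathit{SI}_k$''. But an individual summand $I_i+J_j$ need not be symmetric, so it need not lie in $\mathit{SI}_k$. What your case analysis actually establishes---and what you need---is that each $I_i+J_j$ is a union of open integer intervals (equivalently, an integer-protected union of atoms); symmetry of the full sum $A+B$ is then supplied by the separate observation $-(a+b)=(-a)+(-b)$ you give at the end. The logic is sound; only the intermediate target should read ``union of open integer intervals'' rather than ``$\mathit{SI}_k$''.
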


\section{Applications to the theory of circular nowhere-zero
flows}\label{sec:ap}
\subsection{Circular nowhere-zero flows}\label{sec:cnzf}

 \begin{definition} Given a real number $r\geq 2$, a {\bf circular nowhere-zero
$r$-flow} ($r$-{\sc cnzf} for short) in a graph $G=(V,E)$ is an
assignment $f:E \rightarrow [1,r-1]$ and an orientation $D$ of
$G$, such that $f$ is a {\bf flow} in $D$. That is, for every
vertex $x\in V$, $\sum_{e\in E^+(x)}f(e)=\sum_{e\in E^-(x)}f(e)$
where $E^+(x)$, respectively $E^-(x)$, are the sets of edges
directed from, respectively toward, $x$ in $D$.
\end{definition}
Accordingly defined is:
\begin{definition} The \textbf{circular flow number} $\phi_c(G)$ of a graph $G$ is the
infimum of the set of numbers $r$  for which $G$ admits an
$r$-{\sc cnzf}. If $G$ has a bridge then we define
$\phi_c(G)=\infty$.
\end{definition}

The notion of $r$-{\sc cnzf} was first introduced in \cite{gtz},
observing that $(k,d)$-coloring, previously studied by Bondy and
Hell \cite{BH}, can be interpreted as the dual of real (rather
than integer)-valued nowhere-zero flow. \textbf{Integer}
nowhere-zero flows are much more widely known and intensively
studied  since first presented by W. Tutte \cite{5flow}  60 years
ago. A comprehensive  source for material on integer flows and
related topics is C.Q. Zhang's book \cite{zhang}.

 A circular nowhere-zero {\bf modular}-$r$-flow ($r$-{\sc mcnzf})  is an analogue of an $r$-{\sc cnzf},
 where the additive group of real numbers is replaced by the additive group of $\mathbb{R}/r\mathbb{Z}$.

\begin{definition}
An {\bf $r$-{\sc mcnzf}} in a graph $G=(V,E)$ is an assignment
$f:E \rightarrow [1,r-1] \subseteq \mathbb{R}/r\mathbb{Z}$ and an
orientation $D$ of $G$, such that  for every vertex $x\in V$,
$\sum_{e\in E^+(x)}f(e)=\sum_{e\in E^-(x)}f(e)$. Summation is
performed in $\mathbb{R}/r\mathbb{Z}$.
\end{definition}
Part of the definition of an $r$-{\sc cnzf} is an orientation
where all flow values are positive. Since there are no ``positive"
or ``negative" numbers in $\mathbb{R}/r\mathbb{Z}$, the
orientation where an $r$-{\sc mcnzf} $f$ is defined  is only
required ``for reference". The direction of an edge $e$ can be
reversed and $f$ transformed into another $r$-{\sc mcnzf}, where
$f(e) \in \mathbb{R}/r\mathbb{Z}$ is replaced by $-f(e) \in
\mathbb{R}/r\mathbb{Z}$. As a measure to relate to different
orientations we define:

\begin{definition}
Let $f$ be an $r$-{\sc mcnzf} in a reference orientation $D_0$ of
a graph $G$. We refer by $f_D$ to the flow, on an orientation $D$,
defined by $f_D(e)=f(e)$  for edges $e$ of the same orientation in
$D_0$ and $D$, and $f_D(e)=-f(e)$ if the directions of $e$ in
$D_0$ and $D$  differ.
\end{definition}

 The following result can be easily deduced from
Tutte's original work on integer flows \cite{5flow} and it is also
explicitly stated in some more recent literature, e.g. \cite{pan}

\begin{proposition}\label{mod}
The existence of a circular nowhere-zero $r$-flow in a graph $G$
is equivalent to that of an $r$-{\sc mcnzf}.

Furthermore, if $f$ is an $r$-{\sc mcnzf} in (an orientation
$D_0$) of $G$, then there exists an orientation $D$ of $G$ and a
(real valued positive) $r$-{\sc cnzf} $g$ in $D$, such that  for
every edge $e$ of $G$, $g(e)\equiv f_D(e)$ modulo $r$.
\end{proposition}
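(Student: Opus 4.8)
The first statement reduces to the ``furthermore'' part together with one immediate observation, so I would concentrate on the latter. The easy direction — that an $r$-{\sc cnzf} yields an $r$-{\sc mcnzf} — is immediate: a real flow with all values in $[1,r-1]$ satisfies conservation over $\mathbb{R}$, hence a fortiori modulo $r$, and its values already lie in $[1,r-1]\subseteq\mathbb{R}/r\mathbb{Z}$. Thus everything comes down to producing, from a given $r$-{\sc mcnzf} $f$ in a reference orientation $D_0$, an orientation $D$ and a positive real flow $g$ in $D$ with $g(e)\equiv f_D(e)\pmod r$ for every edge.

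The key preliminary observation is that $g$ is essentially forced once $D$ is chosen. Since $g(e)\in[1,r-1]$ and $g(e)\equiv f_D(e)\pmod r$, and since the arc $[1,r-1]$ has length $r-2<r$, the value $g(e)$ must be the unique representative of $f_D(e)$ in $[1,r-1]$; concretely this is $f(e)$ if $e$ keeps its $D_0$-orientation in $D$, and $r-f(e)$ if it is reversed, and both of these lie in $[1,r-1]$. So the task is purely combinatorial: find an orientation $D$ (equivalently, a set of edges to reverse) for which the real-valued assignment $g(e):=f_D(e)$ is a genuine flow. For any $D$, set $\beta_D(x)=\sum_{e\in E^+(x)}g(e)-\sum_{e\in E^-(x)}g(e)$, the out-minus-in imbalance with respect to $D$. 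Because $f$ is a modular flow, each $\beta_D(x)$ is congruent to $0$ modulo $r$, hence an integer multiple of $r$, and $\sum_x\beta_D(x)=0$. A direct computation shows that reversing a single arc $u\to v$ (replacing its value $f(e)$ by $r-f(e)$) decreases $\beta_D(u)$ by $r$, increases $\beta_D(v)$ by $r$, and leaves all other imbalances unchanged. The plan is then the standard extremal/augmenting argument: among the finitely many orientations $D$, choose one minimizing $\Phi(D)=\sum_x|\beta_D(x)|$, and show the minimizer satisfies $\Phi(D)=0$, so that $g$ is the desired $r$-{\sc cnzf}.

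The heart of the matter — and the step I expect to require the most care — is ruling out a nonzero minimizer. Suppose some vertex $s$ has $\beta_D(s)>0$, hence $\beta_D(s)\ge r$. Let $X$ be the set of vertices reachable from $s$ along directed paths in $D$; then no arc leaves $X$, so summing imbalances over $X$ gives $\sum_{x\in X}\beta_D(x)=-\sum_{e\text{ entering }X}g(e)\le 0$. Since $s\in X$ contributes at least $r>0$, some $t\in X$ must satisfy $\beta_D(t)<0$, i.e. $\beta_D(t)\le -r$. Reversing every arc along a directed path from $s$ to $t$ leaves the imbalance of each internal vertex unchanged (its one incoming and one outgoing path-arc cancel), decreases $\beta_D(s)$ by $r$, and increases $\beta_D(t)$ by $r$; hence $\Phi$ drops by $2r$, contradicting minimality. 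Therefore $\beta_D\equiv 0$, and the resulting $g$ is a positive real flow with $g\equiv f_D$ modulo $r$, as required.

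Two remarks on robustness. First, the argument needs no connectivity hypothesis on $G$: the reachable set $X$ and the augmenting path are found locally from $s$. Second, this is exactly the real-valued, circular analogue of Tutte's classical passage between integer and $\mathbb{Z}_k$ nowhere-zero flows, with the modulus $k$ replaced by the real number $r$ throughout; the only point where realness (rather than integrality) must be checked is the uniqueness of the representative of $f_D(e)$ in $[1,r-1]$, which I would verify first since it underpins the reduction to a purely combinatorial reorientation problem.
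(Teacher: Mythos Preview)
Your argument is correct. The paper, however, does not actually prove this proposition: it states just before the proposition that the result ``can be easily deduced from Tutte's original work on integer flows \cite{5flow} and it is also explicitly stated in some more recent literature, e.g.\ \cite{pan}'', and leaves it at that. What you have written is precisely the standard reorientation/augmenting-path argument that underlies Tutte's classical equivalence between $\mathbb{Z}_k$-flows and integer $k$-flows, carried over to real modulus $r$; so you have supplied, in self-contained form, the proof the paper is content to outsource. Nothing to fix.
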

Consequently,
\begin{proposition}
\label{modular} For any graph $G=(V,E)$, $\phi_c(G)<r$ if and only if there exists an
$r$-{\sc mcnzf} $f$ in $G$  such that $f:E\rightarrow(1,r-1)$.
Accordingly, we refer to such a flow $f$  as a {\bf sub-$r$-{\sc
mcnzf}}.
\end{proposition}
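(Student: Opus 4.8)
The plan is to prove the two implications separately, in each case closing the gap between the \emph{open} interval $(1,r-1)$ of the modular world and the \emph{closed} interval $[1,r-1]$ of an $r$-cnzf by a scaling argument, with Proposition~\ref{mod} supplying the bridge between real and modular flows. Throughout I will use that $G$ is finite, so any flow takes only finitely many values and these can be bounded away from the endpoints of an interval whenever they lie strictly inside it.

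For the forward direction, suppose $\phi_c(G)<r$. By the definition of the circular flow number as an infimum, there is some $r''$ with $\phi_c(G)\le r''<r$ for which $G$ carries an $r''$-cnzf $f$, a real positive flow in an orientation $D$ with $f(e)\in[1,r''-1]$ for every edge $e$ (note $r''\ge 2$, so $r''-1>0$). The idea is simply to scale: since $r''-1<r-1$, I can choose $\lambda$ with $1<\lambda<(r-1)/(r''-1)$, and then $\lambda f$ is again a flow in $D$ with all values in $[\lambda,\lambda(r''-1)]\subseteq(1,r-1)$. Because these values already lie in $[0,r)$, reading them modulo $r$ leaves them unchanged, so $\lambda f$ is an $r$-mcnzf with values in $(1,r-1)$, that is, a sub-$r$-mcnzf.

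For the converse, suppose $f$ is a sub-$r$-mcnzf, so $f:E\to(1,r-1)$. Proposition~\ref{mod} yields an orientation $D$ and a real $r$-cnzf $g$ on $D$ with $g(e)\equiv f_D(e)\pmod r$ for every edge. First I would check that reorienting does not leave the open interval: whether $f_D(e)=f(e)$ or $f_D(e)=-f(e)=r-f(e)$, the value stays in $(1,r-1)$. Then, since both $g(e)\in[1,r-1]$ and $f_D(e)\in(1,r-1)$ lie within the single period $[0,r)$ and are congruent modulo $r$, they must be equal; hence $g$ itself takes values in the open interval $(1,r-1)$. By finiteness there is $\varepsilon>0$ with $g(e)\in[1+\varepsilon,r-1-\varepsilon]$ for all $e$, and scaling $g$ by $1/(1+\varepsilon)$ produces an $(R+1)$-cnzf with $R=(r-1-\varepsilon)/(1+\varepsilon)$ and $R+1<r$, whence $\phi_c(G)\le R+1<r$.

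The routine computations are the two scalings; the step that needs the most care — and where I expect the real content to sit — is the passage between strict and non-strict inequalities, i.e. extracting genuine slack from the open interval. In the converse this hinges on the observation that congruence modulo $r$ together with membership in $[0,r)$ forces $g(e)=f_D(e)$ exactly, which is what pins the real flow $g$ into the \emph{open} interval and, via finiteness of $E$, delivers the strict inequality $\phi_c(G)<r$ rather than merely $\phi_c(G)\le r$.
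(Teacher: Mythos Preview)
Your proof is correct and follows essentially the same approach as the paper: both directions are handled by a scaling argument, with Proposition~\ref{mod} bridging modular and real flows in the converse. Your write-up is in fact more careful than the paper's---you spell out why $g(e)=f_D(e)$ exactly and extract the slack explicitly---whereas the paper simply asserts that $g:E\to(1,r-1)$ and that this makes $g$ an $r'$-\textsc{cnzf} for some $r'<r$.
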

\begin{proof}
If $\phi_c(G)=r-\epsilon<r$, then there exists a flow $g$ in $G$,
$g:E\rightarrow[1,r-1-\epsilon]$. For a small enough $\delta$,
$(1+\delta)g:E\rightarrow(1,r-1)$. When its values  are
interpreted as modulo $r$ residues, $(1+\delta)g$ yields an
$r$-{\sc mcnzf} with range $(1,r-1)$ modulo $r$, as claimed. On
the other hand, if $f:E\rightarrow(1,r-1)$ is a sub-$r$-{\sc
mcnzf}, then, by Proposition \ref{mod}, there exists an $r$-{\sc
cnzf} $g$  such that $g:E\rightarrow(1,r-1)\subseteq \mathbb{R}$.
Recall that if $t\in (1,r-1)$ modulo $r$, then both $t$ and $r-t$
belong to $(1,r-1)$ as real numbers. Then $g$ is clearly an
$r'$-{\sc cnzf} for some $r'<r$, so $\phi_c(G)<r$.
\end{proof}

\subsection{Generalized edges}
\begin{definition}
A {\bf generalized edge} (g-edge  for short) $G_{u,v}$ is simply a
graph $G=(V,E)$ and two of its vertices  $u$ and $v$. The vertices
$u$ and $v$ are the {\bf terminals} of $G_{u,v}$. Using g-edges,
as well as more generalized similar structures  is a rather common
technique, when dealing with circular NZF's, e.g
\cite{pan,mr,lokut,Hag12}  where g-edges are called ``dipoles",
``two terminal networks" and ``2-poles".
\end{definition}

Our interest in g-edges results from the following parameter:
\begin{definition}\label{capacity} The {\bf open $r$-capacity} $\mbox{\it CP}_r(q)$ of a g-edge
$q=G_{u,v}$ is a subset of $\mathbb{R}/r\mathbb{Z}$, defined as
follows: Add to $G$ an additional edge $e_0\notin E(G)$  with
endvertices $u$ and $v$, and set:
\[\mbox{\it CP}_r(G_{u,v})=\{f(e_0)\,|\,f \mbox{ is a modulo r flow in }G\cup e_o \mbox{ and } f:E(G)\rightarrow (1,r-1)\} \]
\end{definition}

More visually, $\mbox{\it CP}_r(G_{u,v})$ is the set of all flow
values (in $\mathbb{R}/r\mathbb{Z}$) which can be ``pushed"
through $G$ from source $u$ to sink $v$, under all orientations of
$G$, where the ``flow capacity" of every edge of $G$ is restricted
to $(1,r-1)$.

A seemingly more natural parameter is the {\bf closed
$r$-capacity}, where the set of allowed flow values is the entire
closed interval $[1, r - 1]$. The notion of a closed capacity is
derived from the definition of an $r$-MCNZF. It is extensively
used in~\cite{pan} to construct graphs $G$ with $\phi_c(G) = r$,
for any rational $4 < r < 5$.  Despite the similar definitions,
open capacities are not straightforwardly obtained from closed
ones. If the closed capacity of a g-edge contains a closed
interval $[a,b]$, the open capacity of the same g-edge does not
necessarily contain the open interval $(a,b)$, and it might even
be disjoint from $[a,b]$. Closed capacities do not serve our
needs, as we focus on sub-$r$-{\sc mcnzf}'s, in particular
sub-5-{\sc mcnzf}. No g-edge is known of  closed 5-capacity other
than $(1,4)$, or $\mathbb{R}/5\mathbb{Z}$. Considering the 5-flow
conjecture, it might well be the case that none exists. In what
follows we only deal with open capacities and omit the term
``open'' when referring to one.

The notion of g-edge relates to $\mbox{\it SI}_k$ via the following basic property of
$k$-flows:
\begin{lemma}
\label{cpk}
 $\mbox{\it CP}_k(G_{u,v})\in \mbox{\it SI}_k$, for any integer $k\ge 2$ and a g-edge
 $G_{u,v}$.
\end{lemma}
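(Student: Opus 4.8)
The plan is to prove separately the two defining features of $\mbox{\it SI}_k$: that $\mbox{\it CP}_k(G_{u,v})$ is symmetric, and that it is a union of open integer intervals. Throughout I write $H=G\cup e_0$ and fix a reference orientation $D_0$ of $H$. Symmetry is the easy half: given a modulo $k$ flow $f$ on $H$ with $f(E(G))\subseteq(1,k-1)$ and $f(e_0)=t$, negating every value (equivalently, reversing $D_0$) produces another modulo $k$ flow $-f$ with $(-f)(e_0)=-t$. Since the arc $(1,k-1)\subseteq\mathbb{R}/k\mathbb{Z}$ is invariant under $x\mapsto -x$, the flow $-f$ is again admissible, so $t\in\mbox{\it CP}_k(G_{u,v})$ if and only if $-t\in\mbox{\it CP}_k(G_{u,v})$.

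For the interval structure I would pass from modular flows to genuine real circulations. Let $W\subseteq\mathbb{R}^{E(H)}$ be the real cycle (flow) space of $H$ in the orientation $D_0$. Because the incidence matrix of $H$ is totally unimodular, every modulo $k$ flow on $H$ is the coordinatewise reduction of some $g\in W$: lift each value into $\mathbb{R}$ arbitrarily, observe that the resulting vertex imbalances are multiples of $k$, and cancel them with a suitable integer vector, which exists by unimodularity. Consequently $\mbox{\it CP}_k(G_{u,v})=\{\,g(e_0)\bmod k : g\in W,\ g(e)\bmod k\in(1,k-1)\ \forall e\in E(G)\,\}$. The real condition $g(e)\bmod k\in(1,k-1)$ says exactly that $g(e)$ lies in one of the branches $(1+n k,\,k-1+n k)$, $n\in\mathbb{Z}$. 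Fixing a branch vector $\vec n=(n_e)_{e\in E(G)}$ singles out an open convex polyhedron $Q_{\vec n}=\{g\in W:\ 1+n_e k<g(e)<k-1+n_e k\ \forall e\}$, and these pieces cover the whole admissible set. Since the projection of a convex set onto the $e_0$-coordinate is an interval, each $Q_{\vec n}$ contributes an open real interval $(\alpha_{\vec n},\beta_{\vec n})$ of attainable values $g(e_0)$, bounded because, by conservation at $u$, $g(e_0)$ is controlled by the finitely many bounded edges of $G$ meeting $u$.

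The crux is to show that the endpoints $\alpha_{\vec n}$ and $\beta_{\vec n}$ are integers. Here $\beta_{\vec n}$ is the supremum of $g(e_0)$ over all circulations $g$ of $H$ whose values on the edges of $G$ respect the integer lower bounds $1+n_e k$ and integer upper bounds $k-1+n_e k$; deleting $e_0$, this is precisely the maximum value of a $u$--$v$ flow in $G$ subject to integral capacity constraints. By the integrality theorem for network flows with integral lower and upper bounds (again a consequence of total unimodularity), this optimum is an integer, and symmetrically so is $\alpha_{\vec n}$. Reducing $(\alpha_{\vec n},\beta_{\vec n})$ modulo $k$ then yields an open integer arc of $\mathbb{R}/k\mathbb{Z}$ (or, if the interval has length exceeding $k$, all of $\mathbb{R}/k\mathbb{Z}$). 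Since the integer endpoints range over the finite set $\{0,\dots,k-1\}$, only finitely many distinct arcs occur, and $\mbox{\it CP}_k(G_{u,v})$ is their union.

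Being also symmetric by the first step, this union lies in $\mbox{\it SI}_k$, as required; the degenerate case $u=v$ (where $e_0$ is a loop and the capacity is all of $\mathbb{R}/k\mathbb{Z}$) is immediate. I expect the integrality step to be the only real obstacle — everything else is bookkeeping about branches and projections — and the whole argument hinges on recognizing the extreme attainable value on $e_0$ as an integral max-flow value in $G$.
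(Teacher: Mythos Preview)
Your argument is correct and takes a genuinely different, more structural route than the paper. The paper fixes a witness $t\in\mbox{\it CP}_k(G_{u,v})$, shrinks the admissible band slightly to $[1+\delta,k-1-\delta]$, and then runs augmenting-path max flow in both directions from that starting flow; the flow value on $e_0$ moves continuously until a saturated cut appears, and counting the edges of that cut shows the extremal value is within $\epsilon$ of an integer. So the paper argues locally, from each attained $t$, that the surrounding unit interval (or two, if $t$ is an integer) is also attained. You instead globalize: lift to the real cycle space, slice the admissible region into the open integer boxes $Q_{\vec n}$, project each onto the $e_0$-coordinate, and read off integer endpoints from LP integrality over the closed box. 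Both proofs ultimately rest on max-flow integrality, but yours invokes it once, abstractly via total unimodularity, while the paper invokes it operationally via saturated cuts. Your formulation has the bonus of making the generalization to regular matroids (flagged in the paper's concluding section) essentially automatic; the paper's version, on the other hand, is more self-contained and yields the concrete bound $m_1+m_2\le |E(G)|$ on how close to an integer one gets for a given $\delta$.

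Two small points worth tightening. First, the projection $W\to\mathbb{R}$, $g\mapsto g(e_0)$, is an open map only if it is nonzero on $W$, i.e.\ only if $e_0$ is not a bridge of $H$; you should note that when $u$ and $v$ lie in different components of $G$ the capacity degenerates (to $\{0\}$ or $\emptyset$), a case the paper also glosses over. Second, your ``$\beta_{\vec n}$ is the max-flow value'' step silently uses that the closure of the nonempty open box $Q_{\vec n}$ is the corresponding closed box, so that the supremum of the linear functional over the open set agrees with its maximum over the closed set; this is routine but worth one sentence.
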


\begin{proof}
Take any $t\in \mbox{\it CP}_k(G_{u,v})$. By definition, there
exists a modulo $k$ flow $f$ in an orientation of $H=G\cup uv$,
such that $f:E(G)\rightarrow(1,k-1)$ and $f(uv)=t$. For a given
$\epsilon>0$ we select $0<\delta<\epsilon/|E(G)|$. We also make
sure that $\delta$ is small enough, so that
$f:E(G)\rightarrow(1+\delta,k-1-\delta)$. It follows that
$f:E(G)\rightarrow [1+\delta,k-1-\delta]$, and, for that closed
interval, no edge $e\in E(G)$ is saturated by $f$, in the sense
that $f(e)$ reaches neither of the bounds $1+\delta$ and
$k-1-\delta$. We now keep the orientation unchanged and apply a
``Max flow" algorithm on the network $G$ with source $u$ and sink
$v$, starting from $f$, in both directions. Although arithmetic is
modulo $k$, the relevant elements of Network flow theory (with
upper and lower capacities) are still valid: Increasing,
respectively decreasing, translates to moving clockwise,
respectively counterclockwise. An augmenting path is a path from
$v$ to $u$, where the current flow on an edge can be increased by
pushing it toward  $k-1-\delta$ if the edge  has the direction of
the path, or toward $1+\delta$ if the edge is of the opposite
direction. Starting with the flow $f$, the flow on $uv$ can be
{\bf continuously} augmented (say, by repeatedly selecting a
shortest augmenting path, to guarantee termination), until a
saturated edge-cut is reached. Similarly, $f$ can be continuously
decreased along decreasing paths (from $u$ to $v$, where the roles
of the bounds $k-1-\delta$ and $1+\delta$ are switched), until a
cut is saturated in the opposite direction. In a saturated cut
obtained by augmenting $f(uv)$, the flow values are $k-1-\delta$
on $m_1$ edges, directed forward, and $1+\delta$ on $m_2$ edges,
directed backwards. The ``max" (modulo $k$) flow on $uv$ at that
stage is $m_1(k-1-\delta)-m_2(1+\delta) =
-(m_1+m_2)-(m_1+m_2)\delta $. Clearly $a=-(m_1+m_2)$ is an integer
and $(m_1+m_2)\le |E(G)|$, so $(m_1+m_2)\delta<\epsilon$. In this
manner, the flow $t$ on $uv$ can be continuously increased to $a -
\epsilon$ where $a$ is an integer, and $\epsilon$ any small
positive number. Similarly, it can be continuously decreased from
$t$ to $b+\epsilon$ for an integer $b$. In summary, for every
small $\epsilon$, there exist integers $a$ and $b$, such that
$t\in (b+\epsilon,a-\epsilon)\subseteq \mbox{\it CP}_k(G_{u,v})$.
This implies that, if $t$ is not an integer, then the entire open
unit interval that includes $t$ is contained in $\mbox{\it
CP}_k(G_{u,v})$. If $t$ is an integer then both open unit
intervals on its two sides are contained in $\mbox{\it
CP}_k(G_{u,v})$. It follows that $\mbox{\it CP}_k(G_{u,v})$ is
indeed a union of open integer intervals. Note that the value $-t$
is obtained by the flow $-f$, so $\mbox{\it CP}_k(G_{u,v})$ is
also symmetric.
\end{proof}
 In what follows we allow the edge set of a graph to consist of both
 (simple) edges and g-edges, by means of the following convention: A
 g-edge $q=H_{u,v}$ in a graph $G$ is a subgraph $H$ of $G$ that
 shares its terminals $u$ and $v$ with the rest of the graph, and is
 otherwise vertex-disjoint from $G-H$. If $f$ is a flow in $G$,
 then $f(q)$ denotes the amount of flow that ``traverses from $u$ to
 $v$ (or the other way around) through the subgraph $H$".  When
 considering sub-$k$-{\sc mcnzf}'s, the main characteristic of a g-edge is
 its $k$-capacity. With that in mind, we refer to a g-edge $q$ with
 terminals $u$ and $v$ and $k$-capacity $A \in \mbox{\it SI}_k$ as an $A$-edge,
 $q=uv$ without elaborating any further on its structure. For that
 matter, a $(1,k-1)$-edge, may, or may not be a simple edge. Any
 other capacity implies a genuine generalized edge. A flow $f$ is a
 sub-$k$-{\sc mcnzf} if and only if for every edge $e$, simple or
 generalized, $f(e) \in \mbox{\it CP}_k(e)$.

 \begin{definition}
 We say that a set $A \in \mbox{\it SI}_k$ is {\bf graphic}, if there exists a
 g-edge with $k$-capacity $A$. The set of all graphic members of
 $\mbox{\it SI}_k$ is denoted here by $\mbox{\it GI}_k$.
 \end{definition}
Rather obvious, yet fundamental observations are:
\begin{proposition}\label{parser}
\noindent \begin{itemize}
 \item Let $q$ with $k$-capacity $A$ and $t$ with $k$-capacity
  $B$  be two  g-edges, sharing a pair of  terminals
  $u$ and $v$ and otherwise disjoint. The union of $q$ and $t$ is called the {\bf parallel join} of $q$
  and $t$ and it forms a new g-edge with terminals $u$ and $v$
  and $k$-capacity  $A+B$ (see Proposition~\ref{algebra} for the definition of $A+B$).

 \item The union of two g-edges $q=uv$ with $k$-capacity $A$, and $t=vw$ with
 $k$-capacity $B$, which share a single terminal $v$ forms the {\bf serial join} of $q$ and
 $t$. That is a new g-edge with terminals $u$ and $w$ and $k$-capacity $A\cap B$.

 \item It follows that the subset $\mbox{\it GI}_k$ of all graphic members of $\mbox{\it SI}_k$ is
 closed under Addition and Intersection and as such, it is a
 {\bf sub-algebra} of $\mbox{\it SI}_k$ with respect to these two operations.

 \end{itemize}
\end{proposition}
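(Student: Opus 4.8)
The plan is to verify the two join operations directly from the definition of open $k$-capacity (Definition~\ref{capacity}), using flow conservation at the shared terminal(s), and then to derive the sub-algebra claim as an immediate corollary of Lemma~\ref{cpk} and Proposition~\ref{algebra}. Throughout I fix reference orientations so that for a g-edge $q=G_{u,v}$ the quantity $f(q)$ denotes the net amount pushed from $u$ to $v$, equal to the value $f(e_0)$ on the auxiliary edge $e_0=uv$ that closes the network; this is exactly the number whose realizable range is, by definition, $\mbox{\it CP}_k(q)$.

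For the parallel join, I would let $P=G\cup H$ be the union of the two networks identified only at $u$ and $v$, add $e_0=uv$, and take any modulo $k$ flow $f$ on $P\cup e_0$ with $f:E(P)\rightarrow(1,k-1)$. Restricting $f$ to $E(G)$ (respectively $E(H)$) together with a virtual edge carrying the through-flow yields a valid in-range flow in each factor, so $f(q)\in A$ and $f(t)\in B$. Writing conservation at $u$ and using that $G$ and $H$ meet only at the terminals, the three net out-flows at $u$ (through $G$, through $H$, and along $e_0$) must cancel, giving $f(e_0)=f(q)+f(t)\in A+B$. Conversely, given $a\in A$ and $b\in B$, I would take witnessing in-range flows in $G$ and in $H$ realizing through-values $a$ and $b$; because the two networks are edge-disjoint and share only the terminals, their union is again a flow, and conservation at $u,v$ forces $f(e_0)=a+b$. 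Hence the capacity of the parallel join is exactly $A+B$.

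The serial join is handled the same way, with the decisive difference that the shared vertex $v$ is now internal to the combined network. Adding $e_0=uw$ and taking an in-range flow $f$, conservation at $v$ equates the flow arriving through $G$ with the flow leaving through $H$, so $f(q)=f(t)=:s$; being realizable in $G$ and in $H$ respectively forces $s\in A$ and $s\in B$, i.e. $s\in A\cap B$, while conservation at $u$ gives $f(e_0)=s$. The converse again combines witnessing flows, now sharing the common through-value $s\in A\cap B$ at $v$. Thus the serial join has capacity $A\cap B$.

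Finally, the sub-algebra statement follows at once: if $A,B\in\mbox{\it GI}_k$ then g-edges realizing them exist, and the parallel and serial joins constructed above are themselves g-edges with capacities $A+B$ and $A\cap B$, so both lie in $\mbox{\it GI}_k$; since $\mbox{\it GI}_k\subseteq\mbox{\it SI}_k$ by Lemma~\ref{cpk} and $\mbox{\it SI}_k$ is closed under these operations by Proposition~\ref{algebra}, so is $\mbox{\it GI}_k$. I expect the only genuine care to lie in the bookkeeping of orientations and signs when translating ``flow pushed through a g-edge'' into conservation equations at the terminals, and in checking that gluing two witnessing flows along shared terminals (parallel) or a shared internal vertex (serial) indeed yields a globally conservative modulo $k$ flow — both of which reduce to the disjointness hypothesis on the g-edges.
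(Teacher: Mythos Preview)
Your proposal is correct. The paper itself offers no proof of this proposition: it is introduced as ``Rather obvious, yet fundamental observations'' and left at that. Your write-up is precisely the straightforward verification the authors had in mind --- flow conservation at the shared terminal(s) forces the through-value to be a sum (parallel) or a common value (serial), and gluing witnessing flows gives the reverse inclusion --- so you are not taking a different route, just spelling out what the paper suppressed.
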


 Let us demonstrate the above by an analysis of the algebra
 $\mbox{\it GI}_3$.
 The $k$-capacity of a simple edge is, by definition, $(1,k-1)$.
 For $k=3$, that is $(1,2)$. The $(+,\cap)$-algebra generated by
 $(1,2)$ includes the following $6$ members:
 \begin{itemize}
\item
$(1,2)$\item$(2,1)=(1,2)+(1,2)$\item$(0,0)=(1,2)+(2,1)$
\item$\mathbb{R}/3\mathbb{Z}=(0,0)+(1,2)$\item$\emptyset=(1,2)\cap(2,1)$
\item$(2,0)\cup(0,1)=(0,0)\cap(2,1)$

\end{itemize}

We tend to believe that the remaining two sets in $\mbox{\it SI}_3$, namely
$\mathbb{R}/3\mathbb{Z}-\{1,2\}$ and
$\mathbb{R}/3\mathbb{Z}-\{0,1,2\}$ are not graphic, yet, at this
point, we have no serious evidence to support such a claim.

\section{$\mbox{\it GI}_5$ and some related observations}\label{>=5}
\begin{definition}
Associated with a set $A \in \mbox{\it SI}_k$ are two size
parameters: its {\bf amplitude} $\mbox{Am}(A)$, which is the
length (number of unit intervals) of the smallest interval that
contains $A$, and its {\bf measure} $\mbox{Me}(A)$, which is the
number of unit intervals contained in $A$.
\end{definition}
In this section, the capacity of a g-edge refers to its open
$5$-capacity
\subsection{Generating  $\mbox{\it GI}_5$}

Included in $\mbox{\it GI}_5$ are
\begin{itemize}
\item $(1,4)$ of amplitude and measure $3$, represented by a
simple edge. We will later build additional $(1,4)$-edges, to
serve some needs (related mostly to edge-connectivity), of
specific constructions.
\item
$\mathbb{R}/5\mathbb{Z}=(1,4)+(1,4)$ of amplitude and measure $5$
\end{itemize}
These two sets form a closed sub-algebra, so another generator is
required in order to go further. Such a generator is the
$5$-capacity of ${\cal P}^*_{10}(u,v)$, the graph obtained from
the Petersen graph ${\cal P}_{10}$ by removing an edge $uv$ (Any
other graph $G$ with $\phi_C(G)=5$, that reduces to less than $5$
when an edge is removed, can be used instead of ${\cal P}_{10}$).
Since $\phi_c({\cal P}_{10})=5$, the capacity of ${\cal
P}^*_{10}(u,v)$ is disjoint from $(1,4)$ and therefore a subset of
$(4,1)$. On the other hand, $\phi_c({\cal P}^*_{10})<5$, which
implies that $0$ is included in the capacity. The only set in
$\mbox{\it SI}_5$ which meets these two conditions is $(4,1)$.
More sets of $\mbox{\it GI}_5$ can now be generated:
\begin{itemize}
\item $(4,1)$ of amplitude and measure $2$.
\item $\emptyset=(1,4)\cap(4,1)$ of amplitude and measure $0$
\item $(3,2)=(4,1)+(4,1)$ of amplitude and measure $4$
\item $(0,0)=(4,1)+(1,4)$ of amplitude and measure $5$
\item $(4,0)\cup(0,1)=(4,1)\cap(0,0)$ of amplitude and measure $2$
\item $(3,0)\cup(0,2)=(0,0)\cap(3,2)$ of amplitude and measure $4$
\item $(1,2)\cup(3,4)=(3,2)\cap(1,4)$ of amplitude $3$ and measure
$2$
\item
$\mathbb{R}/5\mathbb{Z}-\{1,4\}=((1,2)\cup(3,4))+((1,2)\cup(3,4))$
of amplitude and measure $5$
\item
$\mathbb{R}/5\mathbb{Z}-\{0,1,4\}=(\mathbb{R}/5\mathbb{Z}-\{1,4\})\cap
(0,0)$ of amplitude and measure $5$
\item $(3,2)-\{1,4\}=(3,2)\cap(\mathbb{R}/5\mathbb{Z}-\{1,4\})$ of amplitude and measure $4$
\item
$(3,2)-\{0,1,4\}=(3,2)\cap(\mathbb{R}/5\mathbb{Z}-\{0,1,4\})$ of
amplitude and measure $4$
\end{itemize}
 Once again, the serial-parallel routine ceases to produce new results and an additional
 tool is required.\\
 Consider a vertex $x$ of degree $3$ where one of
the three edges incident with $x$ is of capacity
$B\subseteq(1,4)$. Let  the orientation of the other two edges $a$
and $b$ be such that one of them is outgoing from $x$ and the
other one is ingoing. Let $f$ be a sub-$5$-{\sc mcnzf}. For the
difference between $f(a)$ and $f(b)$ to lie in $B\subseteq(1,4)$,
these two values cannot belong to the same unit interval. As a
consequence:
\begin{lemma} \label{measure2}
Let $P$ be a path with at least one internal vertex in a graph
$G$, such that: all edges of $P$ are of the same capacity $A$, of
measure $\mbox{Me}(A)=2$; every internal vertex $v$ has degree
three and the third edge incident to $v$ (the one not in $P$) is
of some capacity $B_v\subseteq(1,4)$ (in particular a simple
edge). Assume an orientation of $G$ where $P$ is a directed path.
If $f$ is a sub-$5$-{\sc mcnzf} in $G$, then the values of $f$
along $P$ are alternating between the two unit intervals contained
in $A$.
\end{lemma}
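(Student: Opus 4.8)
The plan is to exploit the key constraint already isolated in the paragraph preceding the lemma: at a degree-three vertex where one incident edge has capacity $B_v \subseteq (1,4)$ and the other two edges $a,b$ of the path $P$ are oriented so one enters and one leaves, the flow difference $f(a)-f(b)$ must lie in $B_v$, hence in $(1,4)$. Since $A$ has measure $\mbox{Me}(A)=2$, it consists of exactly two open unit intervals; call them $J_0$ and $J_1$. Because $A$ is symmetric and has measure $2$, the two intervals are $(j, j+1)$ and $(-j-1, -j)$ for some integer $j$ (in the concrete cases of interest, e.g. $(4,1)=(4,5)\cup(0,1)$ or $(1,2)\cup(3,4)$), and the crucial arithmetic fact is that two values lying in the \emph{same} unit interval $J_i$ differ by something in $(-1,1) \pmod 5$, which is disjoint from $(1,4)$. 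I would first establish this disjointness observation as the arithmetic heart of the argument.

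First I would set up notation: fix the given orientation making $P=v_0 v_1 \cdots v_m$ a directed path, and for each edge $e_i = v_{i-1}v_i$ write $f(e_i) \in A = J_0 \cup J_1$, which is forced since $f$ is a sub-$5$-\textsc{mcnzf} and every path edge has capacity $A$ (so $f(e_i)\in \mbox{\it CP}_5(e_i)=A$). Then at each internal vertex $v_i$ ($1 \le i \le m-1$), the two path edges $e_i, e_{i+1}$ meet, with $e_i$ ingoing and $e_{i+1}$ outgoing (since $P$ is directed). Conservation at $v_i$ together with the third edge $g_i$ of capacity $B_{v_i}$ gives $f(e_i) - f(e_{i+1}) = \pm f(g_i) \in (1,4)$, where the sign depends only on the reference orientation of $g_i$ and in either case the value lands in the symmetric set $(1,4)$. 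The second step is to conclude from $f(e_i)-f(e_{i+1}) \in (1,4)$ that $f(e_i)$ and $f(e_{i+1})$ cannot lie in the same unit interval, because within one open unit interval all differences lie in $(-1,1) \pmod 5$, which is disjoint from $(1,4)$.

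The final step is a straightforward induction along $P$: consecutive path-edge values lie in distinct members of $\{J_0, J_1\}$, and since there are only two intervals, the values must alternate $J_0, J_1, J_0, J_1, \ldots$ (or the reverse) as one traverses $P$. I would phrase this as: having shown $f(e_i)$ and $f(e_{i+1})$ are never in the same interval, and there being exactly two intervals, the assignment $i \mapsto (\text{interval containing } f(e_i))$ flips at every step, which is precisely the claimed alternation.

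The main obstacle, and the point deserving care, is the second step — verifying rigorously that two elements of a single open unit interval $J_i$ in $\mathbb{R}/5\mathbb{Z}$ have difference confined to $(-1,1)=(4,1)$ modulo $5$, and that this is genuinely disjoint from $(1,4)$. This uses that $J_i$ has length strictly less than $1$ (it is an \emph{open} unit interval) so the difference of two of its points has absolute value strictly less than $1$ before reduction; one must confirm no wraparound in $\mathbb{R}/5\mathbb{Z}$ spoils this, which holds precisely because the interval length is $1 < 5$. I would also note explicitly why $f(e_i)\in A$ for \emph{every} path edge, including the two end edges, so the alternation is well-defined along the whole path, not merely at internal vertices.
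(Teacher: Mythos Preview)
Your proposal is correct and follows essentially the same approach as the paper: the paper's entire argument is the short paragraph immediately preceding the lemma, which observes that at a degree-$3$ vertex with one path edge incoming, one outgoing, and the third edge of capacity $B\subseteq(1,4)$, the difference of the two path-edge values must lie in $(1,4)$ and hence the two values cannot share a unit interval. You have simply spelled out in full the details (flow conservation, symmetry of $(1,4)$, the $(-1,1)$ bound on differences within a single open unit interval, and the induction along $P$) that the paper leaves implicit.
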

Alternating values along an odd cycle bear a contradiction, which
implies the following two conclusions:

\begin{corollary}\label{odd1}
 Let $C$ be an odd cycle in a graph $G$, along vertices of degree
 $3$. If all edges of $C$ are of the same capacity $A$, of measure
$\mbox{Me}(A)=2$, and the third edges incident with each vertex of
$C$ is of some (not necessarily the same) capacity
$B\subseteq(1,4)$ (in particular a simple edge), then
$\phi_c(G)\ge 5$.
\end{corollary}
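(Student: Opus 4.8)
The plan is to derive Corollary~\ref{odd1} as a direct consequence of Lemma~\ref{measure2}, which already does the heavy lifting. The key observation is that an odd cycle $C$ can be treated as a closed path to which the alternation conclusion of the lemma applies, and that alternation between two distinct unit intervals around an odd cycle is combinatorially impossible, yielding the contradiction.

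First I would argue by contradiction: suppose $\phi_c(G)<5$, so by Proposition~\ref{modular} there exists a sub-$5$-{\sc mcnzf} $f$ in $G$. Fix an orientation of $G$ that makes $C$ a directed cycle (this is always possible for a single cycle). Since $C$ has length $n$ with $n$ odd, pick any edge $e_0$ of $C$ and consider the path $P$ obtained by removing $e_0$ from $C$; this $P$ is a directed path with $n-1\ge 2$ internal vertices (here I use that $C$ lies along vertices of degree $3$, so each vertex of $C$ has exactly one incident edge off the cycle, of some capacity $B\subseteq(1,4)$). All edges of $P$ have the same capacity $A$ of measure $2$, so the hypotheses of Lemma~\ref{measure2} are met, and the lemma tells me that $f$ takes values alternating between the two unit intervals $J_1,J_2$ contained in $A$ as one traverses $P$.

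Next I would track this alternation around the full cycle. Label the two unit intervals of $A$ as $J_1$ and $J_2$. Traversing the consecutive edges $e_1,e_2,\dots,e_{n-1}$ of $P$, the values $f(e_i)$ lie alternately in $J_1,J_2,J_1,\dots$, so $f(e_1)$ and $f(e_{n-1})$ lie in the \emph{same} unit interval when $n-1$ is odd, i.e.\ when $n$ is even, and in \emph{different} intervals when $n$ is odd. Now the removed edge $e_0$ closes the cycle, and the same degree-$3$ alternation constraint that drives the lemma applies at the two endpoints of $e_0$: the flow values on the two cycle-edges meeting at each endpoint of $e_0$ must lie in different unit intervals of $A$ (otherwise their difference, which must lie in $B\subseteq(1,4)$, would force them into the same unit interval, a contradiction). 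Propagating the alternation consistently all the way around a cycle of odd length forces some edge to lie simultaneously in both $J_1$ and $J_2$, which is impossible since $J_1\cap J_2=\emptyset$. This contradiction shows no sub-$5$-{\sc mcnzf} exists, hence $\phi_c(G)\ge 5$.

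The main obstacle I expect is bookkeeping the alternation cleanly across the single ``seam'' edge $e_0$ rather than the argument itself: I must verify that the alternation constraint holding at every internal vertex of $P$ also holds at the two vertices incident to $e_0$, so that the parity obstruction is genuine around the closed odd cycle and not merely along the open path. The cleanest way to handle this is to note that the constraint in the lemma is \emph{local}: at any degree-$3$ vertex of $C$ whose two cycle-edges are oriented as one-in/one-out and whose third edge has capacity inside $(1,4)$, the two cycle-edge values cannot share a unit interval. Since an odd directed cycle necessarily contains a vertex with one incoming and one outgoing cycle-edge arrangement at every vertex once oriented as a directed cycle, this local constraint applies uniformly at all $n$ vertices, and the standard parity argument (an odd cycle cannot be properly $2$-colored) completes the contradiction.
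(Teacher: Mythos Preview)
Your proposal is correct and follows exactly the approach the paper indicates: the paper derives Corollary~\ref{odd1} immediately from Lemma~\ref{measure2} with the one-line remark that ``alternating values along an odd cycle bear a contradiction,'' and you have simply spelled out that parity argument in detail. One small slip: removing an edge from a cycle of length $n$ leaves a path with $n-2$ internal vertices, not $n-1$, but since $n\ge 3$ this still gives at least one internal vertex and Lemma~\ref{measure2} applies.
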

and
\begin{corollary}\label{odd}
Let $C$ be an  odd cycle in a graph $G$, along vertices of
degree $3$, such that: all edges of $C$ are of the same capacity
$A$, of measure $\mbox{Me}(A)=2$, and the third edge incident with
each vertex of $C$ is of capacity $B\subseteq(1,4)$ (in particular
a simple edge). The deletion of an edge $uv$ of $C$ results in a
g-edge $q=G_{u,v}$ such that $\mbox{\it CP}_5(q)\subseteq
\overline{\sigma(A)}$.
\end{corollary}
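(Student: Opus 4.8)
The plan is to prove the inclusion in two stages: first establish the weaker disjointness $\mbox{\it CP}_5(q)\cap A=\emptyset$, and then upgrade it to $\mbox{\it CP}_5(q)\cap\sigma(A)=\emptyset$ (which is exactly the assertion $\mbox{\it CP}_5(q)\subseteq\overline{\sigma(A)}$) by exploiting the fact that $\mbox{\it CP}_5(q)$ is itself a union of open integer intervals. Throughout, recall that $q=G_{u,v}$ is $G$ with the edge $uv$ of $C$ deleted, and that by Definition~\ref{capacity} a value $t$ lies in $\mbox{\it CP}_5(q)$ exactly when $G-uv$, together with a reinserted measuring edge $e_0=uv$, carries a mod $5$ flow $f$ with $f(e_0)=t$ in which every (simple or generalized) edge of $q$ respects its capacity.

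For the first stage I would argue by contradiction. Suppose some $t\in\mbox{\it CP}_5(q)\cap A$, witnessed by a flow $f$ as above with $f(e_0)=t$. Since $t\in A$, the measuring edge $e_0$, read now as the original $A$-edge $uv$, also respects its capacity; as $A$ is symmetric this holds regardless of the orientation chosen for $e_0$. Hence $f$ is a flow on all of $G$ with the deleted edge restored, in which every edge---the $A$-edges of $C$ including $uv$, the third edges of capacity $B\subseteq(1,4)$, and all remaining edges of $G$---respects its capacity. That is precisely a sub-$5$-{\sc mcnzf} of $G$, so by Proposition~\ref{modular} we would obtain $\phi_c(G)<5$, contradicting Corollary~\ref{odd1}. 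Therefore $\mbox{\it CP}_5(q)\cap A=\emptyset$.

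It remains to upgrade disjointness from $A$ to disjointness from its closure $\sigma(A)$, and this is where the only real subtlety lies. By Lemma~\ref{cpk} we have $\mbox{\it CP}_5(q)\in\mbox{\it SI}_5$, so $\mbox{\it CP}_5(q)$ is a union of \emph{open integer} intervals. The closure $\sigma(A)$ exceeds $A$ only by the finitely many boundary integers of $A$, i.e.\ the integers in $\sigma(A)\setminus A$; fix such a boundary integer $j$ and observe that, because $A$ is a union of open integer intervals, one of the two unit intervals adjacent to $j$ lies inside $A$. If $j$ belonged to $\mbox{\it CP}_5(q)$, then---again because $\mbox{\it CP}_5(q)$ is a union of open integer intervals---$j$ would be interior to one of them, forcing both adjacent unit intervals $(j-1,j)$ and $(j,j+1)$ into $\mbox{\it CP}_5(q)$. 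In particular the adjacent unit interval contained in $A$ would lie in $\mbox{\it CP}_5(q)$, contradicting the first stage. Hence no boundary integer of $A$ lies in $\mbox{\it CP}_5(q)$, so $\mbox{\it CP}_5(q)\cap\sigma(A)=\emptyset$, that is $\mbox{\it CP}_5(q)\subseteq\overline{\sigma(A)}$. The crux is thus not the flow argument (which is immediate from Corollary~\ref{odd1}) but the membership $\mbox{\it CP}_5(q)\in\mbox{\it SI}_5$: an arbitrary open set avoiding $A$ could still sneak in a boundary integer of $A$, and it is precisely the integrality of the interval endpoints forced by $\mbox{\it SI}_5$ that rules this out.
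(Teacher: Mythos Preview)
Your proof is correct, but it takes a genuinely different route from what the paper intends. The paper gives no explicit proof of this corollary; it simply says that both Corollaries~\ref{odd1} and~\ref{odd} follow because ``alternating values along an odd cycle bear a contradiction''. The implied direct argument runs as follows: in $q+e_0$ apply Lemma~\ref{measure2} to the even-length path $P=C-uv$; its first and last edges then lie in the two \emph{different} unit intervals $I_1,I_2$ of $A$, and the degree-$3$ constraint at $u$ (respectively $v$)---exactly the computation in the paragraph preceding Lemma~\ref{measure2}---forces $f(e_0)$ outside the closed unit interval $\sigma(I_1)$ (respectively $\sigma(I_2)$), hence outside $\sigma(I_1)\cup\sigma(I_2)=\sigma(A)$. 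No appeal to Corollary~\ref{odd1} or to Lemma~\ref{cpk} is needed.

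Your approach instead black-boxes Corollary~\ref{odd1} to obtain only $\mbox{\it CP}_5(q)\cap A=\emptyset$, and then invokes Lemma~\ref{cpk} to upgrade this to disjointness from $\sigma(A)$. The direct argument is shorter; on the other hand, your second stage isolates a pleasant general fact---two members of $\mbox{\it SI}_k$ that are disjoint are automatically disjoint from each other's closures---which is reusable and makes transparent why openness matters. One minor remark on your first stage: when you write ``the measuring edge $e_0$, read now as the original $A$-edge $uv$'', bear in mind that $uv$ is itself a g-edge (a subgraph), so what you are really using is that $t\in A=\mbox{\it CP}_5(uv)$ lets you \emph{extend} the flow from $q$ across the subgraph realizing $uv$. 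This is immediate from the definition of capacity (and the symmetry of $A$), but it is a touch more than literally identifying $e_0$ with $uv$.
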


Corollary \ref{odd} now allows us to construct a $3$-edge-connected
$(1,4)$-edge, to be later used as a replacement for single
edges, when higher connectivity is required:
\begin{definition}
A {\bf thick} (1,4)-edge with terminals $u$ and $v$ is obtained
from a copy of $K_4$ where two edges of a triangle are replaced by
$(4,1)$-edges, and the third edge, $uv$, of that triangle is
removed.
\end{definition}

By Corollary \ref{odd}, the capacity of the obtained g-edge with
terminals $u$ and $v$ (see Figure \ref{gedges}, top right) is a
subset of $\overline{\sigma(4,1)}=(1,4)$. It is easy to verify
that $2$ belongs to the obtained capacity, which is therefore,
indeed $(1,4)$ (no set in $\mbox{\it SI}_5$ which is a proper
subset of $(1,4)$ includes the point $2$).

\smallskip

Following the exact same lines with the capacity $(4,1)$ replaced
by $(1,2)\cup(3,4)$, Corollary \ref{odd} can be used to further
broaden the list of sets in $\mbox{\it GI}_5$. If two edges of a
triangle of $K_4$ are replaced by $(1,2)\cup(3,4)$-edges, and the
third edge $uv$ of that triangle is removed, then, by Corollary
\ref{odd}, the capacity $D$ of the obtained g-edge with terminals
$u$ and $v$ (see Figure \ref{gedges}) is a subset of
$\overline{\sigma((1,2)\cup(3,4))}=(4,1)\cup(2,3)$. It is easy to
verify $0\in D$ and $\frac52 \in D$, so $D=(4,1)\cup(2,3)$.

New members can now be added to $\mbox{\it GI}_5$
\begin{itemize}
\item$(4,1)\cup(2,3)$, of amplitude $4$ and measure $3$
\item $(4,0)\cup(0,1)\cup(2,3)=((4,1)\cup(2,3))\cap(0,0)$, of same
amplitude and measure
\item$(2,3)=((4,1)\cup(2,3))\cap(1,4)$, of amplitude and measure $1$

\end{itemize}
We have listed, so far, $16$ members of $GI_5$. The remaining $5$
sets in $SI_5$ are obtained by removing $\{2,3\}$ from the $5$
sets in our list that contain $\{2,3\}$ as a subset. A similar
phenomenon was observed in $GI_3$. We tend to believe these $5$
sets are not graphic, but so far, we have nothing to support that
claim.

\medskip

\begin{figure}[htb]
\centering \includegraphics[width=14cm]{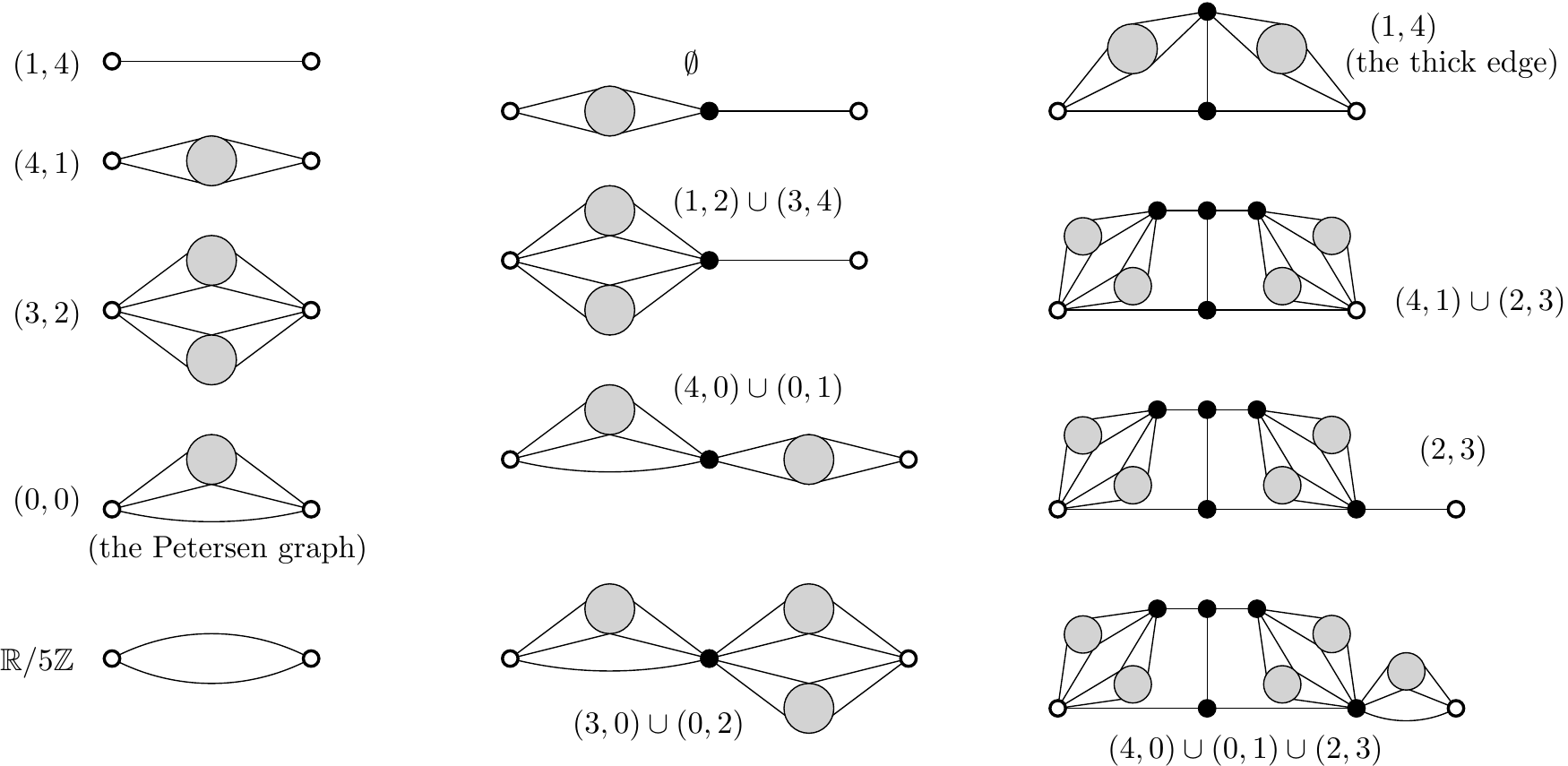} \caption{Basic
generalized edges}\label{gedges}
\end{figure}

Figure \ref{gedges} shows g-edges for twelve capacities (two for
(1,4)) out of the sixteen on our list. Next to the diagram of each
is its $5$-capacity. Terminal vertices are depicted by white dots.
The repeatedly used \includegraphics[width=1cm]{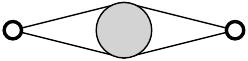} shaped
pattern stands for the $(4,1)$-edge ${\cal P}^*_{10}(u,v)$
obtained from ${\cal P}_{10}$ by the removal of an edge $uv$.

\medskip

We conclude our arsenal of $\mbox{\it GI}_5$ related observations
with the following schema:
\begin{lemma} \label{cycle} Let $C$ be a cycle, consisting of
simple edges, in a graph $G$. Let $T\subseteq \mbox{\it GI}_5$ be
a set of $5$-capacities, such that the amplitude of their union is
at most $3$. Let $G'$ be obtained from $G$ by replacing every edge
of $C$ by an $A$-edge, for some (not necessarily the same) $A\in
T$. If $\phi_c(G)\ge 5$, then also $\phi_c(G')\ge 5$. \end{lemma}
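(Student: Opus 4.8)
The plan is to prove the contrapositive: assuming that $G'$ admits a sub-$5$-{\sc mcnzf} $f'$, I will produce a sub-$5$-{\sc mcnzf} in $G$, which contradicts $\phi_c(G)\ge 5$ by Proposition~\ref{modular}. The whole idea is that adding a fixed constant to the flow along a directed cycle preserves flow conservation, and that the amplitude bound of $3$ is exactly matched to the length of the interval $(1,4)$.

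First I would fix a cyclic orientation of $C$ and orient every edge of $C$ in that direction. Since each capacity in $\mbox{\it GI}_5\subseteq \mbox{\it SI}_5$ is symmetric (as guaranteed for any $5$-capacity by Lemma~\ref{cpk}), reversing an edge $e$ of $C$ replaces $f'(e)$ by $-f'(e)$, which still lies in the same $A_e$; hence after this reorientation the value carried by each edge $e$ of $C$ satisfies $f'(e)\in A_e\subseteq \bigcup_{A\in T}A$. By hypothesis $\bigcup_{A\in T}A$ has amplitude at most $3$, so it is contained in some interval $(a,b)$ of length at most $3$ with integer endpoints. Setting $c=1-a$ (computed in $\mathbb{R}/5\mathbb{Z}$), the translate $(a,b)+c=(1,b-a+1)$ has left endpoint $1$ and length at most $3$, hence is contained in $(1,4)$; in particular $f'(e)+c\in(1,4)$ for every edge $e$ of $C$.

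Next I would define a flow $f$ on $G$ (in which the edges of $C$ are again simple $(1,4)$-edges) by setting $f(e)=f'(e)+c$ for each edge $e$ of $C$, oriented along the cycle, and $f(e)=f'(e)$ for every other edge. Adding the constant $c$ to every edge of the directed cycle $C$ increases, at each vertex of $C$, both the incoming total and the outgoing total by exactly $c$, so the conservation condition is preserved at every vertex and $f$ is indeed a flow on $G$. By the previous paragraph $f(e)\in(1,4)$ on the edges of $C$, which is precisely the capacity of a simple edge, while on every other edge $f(e)=f'(e)$ lies in the (unchanged) capacity of that edge because $f'$ was a sub-$5$-{\sc mcnzf}. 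By the characterization that a flow is a sub-$5$-{\sc mcnzf} exactly when every edge carries a value in its $5$-capacity, $f$ is a sub-$5$-{\sc mcnzf} in $G$, so $\phi_c(G)<5$, a contradiction.

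The only genuinely delicate point, and the reason the amplitude hypothesis cannot be weakened, is that conservation forces us to shift all of $C$ by one and the same constant $c$: a separate shift per edge would destroy the balance at the cycle vertices. Consequently every value $f'(e)$ on $C$ must be slid into $(1,4)$ simultaneously, which is possible precisely because they all lie in a common window of length at most $3$ --- the length of $(1,4)$ --- furnished by the bound $\mathrm{Am}\bigl(\bigcup_{A\in T}A\bigr)\le 3$. I expect the routine parts (using symmetry to reorient, and verifying conservation) to be immediate, so the entire content of the lemma is this matching of the amplitude to the length of $(1,4)$.
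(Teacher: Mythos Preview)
Your proof is correct and follows essentially the same approach as the paper: assume a sub-$5$-{\sc mcnzf} on $G'$, use the amplitude bound to find a single shift sending all cycle values into $(1,4)$, and add the corresponding constant-on-$C$ flow to obtain a sub-$5$-{\sc mcnzf} on $G$. Your version is slightly more explicit about the cyclic orientation and the symmetry of capacities, and you add commentary on the sharpness of the amplitude~$3$ hypothesis, but the argument is the same.
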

\begin{proof}
Assume, to the contrary that there exists a sub-$5$-{\sc mcnzf}
$f$ in $G'$. Since the union of all members of $T$ is of amplitude
at most $3$, there exists $t\in \mathbb{R}/5\mathbb{Z}$ such that
for every $y\in \bigcup_{A\in T}A$,  $~y+t\in (1,4)$. Let $f_1$ be
a flow in $G'$, defined by $f_1(e)=t$ if $e$ belongs to $C$, and
$f_1(e)=0$ for all other edges $e$. The flow value of $f+f_1$
belongs to $(1,4)$ for every edge of $C$ and $f+f_1$ is identical
to $f$ on the other edges. $f+f_1$ is therefore, a sub-$5$-{\sc
mcnzf} in the original graph $G$, which is a contradiction if
$\phi_c(G)\ge 5$.
\end{proof}

\section{Explicit construction of graphs $G$, in particular snarks,
with $\phi_c(G)\ge5$}\label{explicit}
 Let $F_{\ge 5}$ stand for the set of graphs
$G$ with $\phi_c(G) \ge 5$, and let $S_{\ge 5}$ be the set of all
snarks in $F_{\ge 5}$. Recall that a snark is  a 3-regular graph
$S$, cyclically $4$-edge-connected, of girth $5$ or more and
circular flow number $\phi_c(S)>4$.

For some time, $S_{\ge 5}$ was conjectured to consist solely of
the Petersen graph \cite{mohar}, until an infinite family of such
snarks was presented in \cite{mr}. Similar constructions, aimed
toward different goals, can also be found in other articles, e.g
\cite{Hag12}. Nonetheless, we now demonstrate that $S_{\ge 5}$ is
in fact much richer than that. The concepts and tools, developed
on the previous section, give rise to a large variety of snarks in
$S_{\ge 5}$.
 Let us first note that a graph $G \in F_{\ge 5}$, of the right
 girth and connectivity, can be transformed into a snark in $S_{\ge
 5}$, by means of:

 \begin{definition}
Given a graph $G$, an \textbf{expansion} of a vertex $x$ into a
graph $X$ is obtained by: Deleting the vertex $x$ from $G$ and
replacing it by the graph $X$. Each edge $yx$ of $G$ is replaced
by an edge between $y$ and an arbitrary vertex of $X$.
\end{definition}

\begin{proposition} \label{exp}
 Let $G'$ be obtained by a vertex expansion of a graph $G=(V,E)$
 and let $f$ be a flow in $G'$, then the restriction of $f$ to $E$
 is a flow in $G$. Consequently, $\phi_c(G')\ge \phi_c(G)$. In particular,
 if $G \in F_{\ge 5}$, so is $G'$.
 \end{proposition}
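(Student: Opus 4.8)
The plan is to realize $G$ as the graph obtained from $G'$ by contracting the whole of $X$ back to the single vertex $x$, and then to invoke the elementary fact that flows survive vertex contraction. Concretely, I would partition the edges of $G'$ into three classes: the edges of $G$ not incident to $x$, which appear unchanged in $G'$; the edges of $G'$ that replace the edges $yx$ of $G$ incident to $x$ (each such edge runs between $y$ and some vertex of $X$, and I identify it with the original edge $yx\in E$); and the internal edges of $X$, both of whose endpoints lie in $V(X)$. Under this identification $E$ is exactly the set of edges of $G'$ with at least one endpoint outside $V(X)$. I orient each edge of $G$ the same way its image is oriented in the orientation of $G'$ in which $f$ is a flow, and let the restricted assignment take, on each $e\in E$, the value $f$ assigns to its image. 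It then remains to check that this assignment obeys Kirchhoff's law at every vertex of $G$.

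At a vertex $v\ne x$, the edges of $G$ incident to $v$ are in value- and orientation-preserving bijection with the edges of $G'$ incident to $v$ (no internal edge of $X$ touches $v$, since $v\notin V(X)$), so the conservation equation for $v$ in $G$ coincides with the one for $v$ in $G'$, which holds because $f$ is a flow. The only genuine point is conservation at $x$. For this I would sum the conservation equations of $f$ over all vertices $w\in V(X)$. Every internal edge of $X$ has both endpoints in $V(X)$, hence contributes once with a plus sign (at its head) and once with a minus sign (at its tail), so these terms cancel. What survives is precisely the signed sum of $f$ over the edges crossing the cut between $V(X)$ and the rest of $G'$, i.e.\ over the images of the edges incident to $x$; this sum is $0$. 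By the chosen orientation and identification, that is exactly the conservation equation for $x$ in $G$. Hence the restriction of $f$ to $E$ is a flow in $G$.

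For the consequences, note that the restricted flow takes, on each edge, a value already attained by $f$ on its image; so if $f$ ranges in $[1,r-1]$ (respectively in $(1,r-1)$ modulo $r$), so does its restriction. Thus every $r$-{\sc cnzf} of $G'$ restricts to an $r$-{\sc cnzf} of $G$, whence the set of $r$ for which $G'$ admits an $r$-{\sc cnzf} is contained in the corresponding set for $G$; taking infima gives $\phi_c(G')\ge\phi_c(G)$. In particular, if $\phi_c(G)\ge 5$ then $\phi_c(G')\ge 5$, i.e.\ $G\in F_{\ge 5}$ implies $G'\in F_{\ge 5}$.

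This argument is almost entirely bookkeeping; the one step that carries the real content is the cancellation of the internal $X$-edges when the conservation equations are summed over $V(X)$, which is what converts ``$f$ balances at each vertex of $X$'' into ``$f$ balances across the cut around $X$''. The only care needed is to keep the orientations and the edge identification consistent so that the surviving cut sum is literally the conservation equation at $x$; I note in particular that $X$ need not be connected for this to work, and that parallel edges between $y$ and $x$ cause no difficulty.
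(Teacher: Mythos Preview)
Your argument is correct: it is the standard ``contract $X$ back to $x$'' proof, carried out carefully via summing Kirchhoff's law over $V(X)$. The paper, in fact, provides no proof of this proposition at all; it states it as an evident fact and moves on. So there is nothing to compare against---you have simply supplied the elementary details the authors chose to omit.
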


Expansion can be accompanied by:
\begin{definition}
\textbf{Smoothing} a vertex $x$ of degree $d(x)=2$ means the
removal of $x$, while merging the two edges $ux$ and $xv$ incident
with $x$, into a single new edge $uv$.
\end{definition}

Observe that smoothing a vertex does not affect flow values.

\smallskip

Considering the above, we focus on constructing cyclically
$4$-edge-connected graphs in $F_{\ge 5}$, of girth at least $5$.
Each such graph can then be transformed into infinitely many
snarks $S \in S_{\ge 5}$, by selecting proper expansion graphs
(almost) arbitrarily. An expansion graph $X$ should not
necessarily be highly connected. Edges by which $X$ is attached to
the rest of the graph can be tailored to repair small edge-cuts.
The graph $X$ is not even required to be connected (see Figure
\ref{small}). Let us now briefly describe some actual
constructions, based on these principles :
\subsection{Constructions based on Corollary \ref{odd1}}
Corollary \ref{odd1} lets us turn any arbitrarily selected
non-bipartite cubic graph $G$, which is ``almost" cyclically
$4$-edge-connected, of girth ``almost" $\ge 5$ into a snark, by
replacing the edges of any odd cycle $C$ by $A$-edges, of measure
Me$(A)=2$, and then properly expanding vertices of degree $>3$. We
used the term ``almost", because the length of  $C$ can be less
than $5$, yet larger than that, when simple edges are replaced by
g-edges. The same holds for small edge-cuts, which includes edges
of $C$. There are three different sets of measure $2$ in
$\mbox{\it GI}_5$. The initial graph $G$ and expansion graphs for
vertices of large degrees can be arbitrarily selected. The family
of obtained members of $S_{\ge 5}$, by means of this method only
is already rather diverse and rich.

\smallskip

Here are some of the smallest possible examples: Replace the three
edges of a triangle in $K_4$, by $(4,1)$-edges, ${\cal P}^*_{10}(u,v)$,
to obtain the graph drawn in Figure \ref{small}, left. By Corollary
\ref{odd1} the obtained graph belongs to $F_{\ge 5}$. It includes
$3$ vertices of degree $5$, which should be expanded in order to
obtain a snark.

\medskip

\begin{figure}[htb]
  \centering
  \includegraphics[width=4cm]{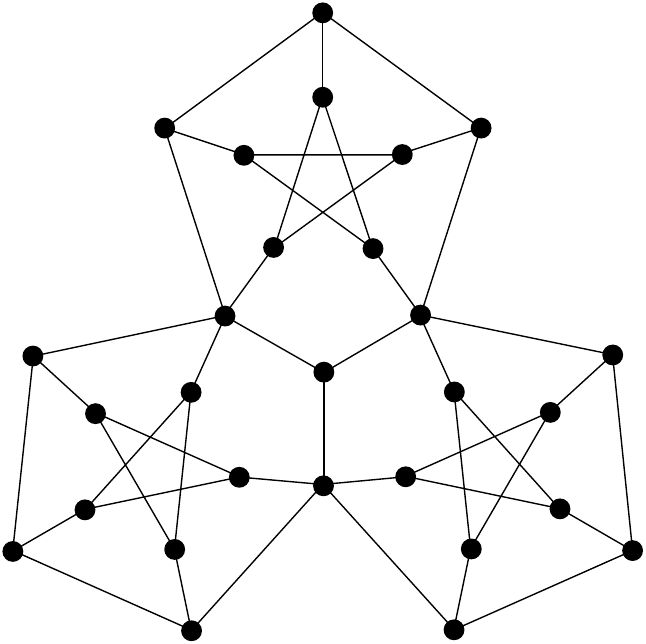}\hspace{0.5cm}
\includegraphics[width=4cm]{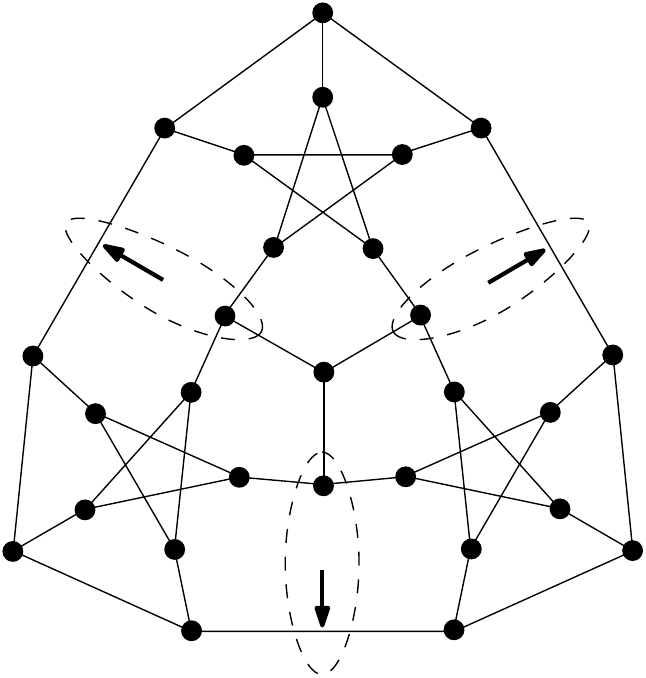}\hspace{0.5cm}
\includegraphics[width=4cm]{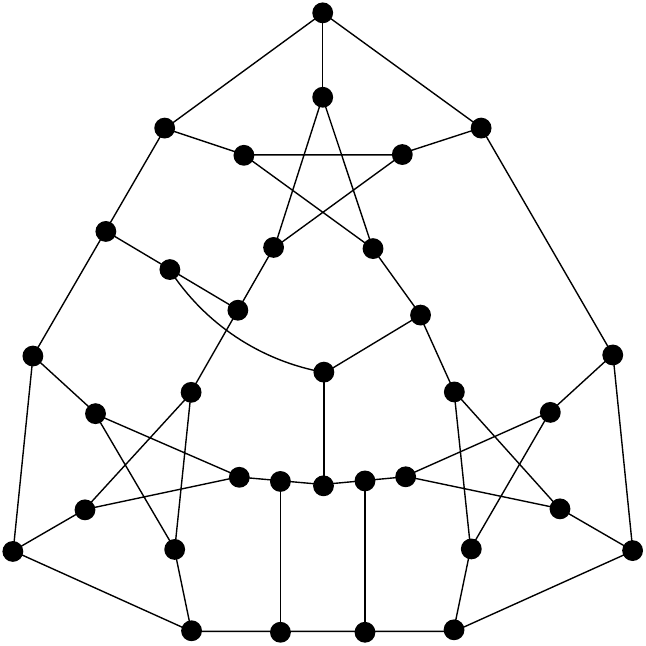}
\caption{$K_4$ with a triangle consisting of $(4,1)$-edges (left); $S_{28}$ (center); and three distinct expansion graphs
applied to the same source graph (right).}\label{small}
\end{figure}

The snark in Figure \ref{small} (center) (Let's call it $S_{28}$,
not an official name) is obtained by expanding each of the $3$
vertices of degree $5$ of the graph in Figure \ref{small} (left)
into a graph which consists of two isolated vertices. Each arrow
in the diagram points at a location of a missing second vertex,
which was removed by \textbf{smoothing}. The graph $S_{28}$ was
identified as a snark during a computerized survey \cite{snarks},
to discover all snarks of order $30$ or less. The circular flow
number of the graph was calculated by M\'a\v cajov\'a, and Raspaud
\cite{mr} -- also by means of a computer program. They identified
$S_{28}$ as the smallest snark in $S_{\ge5}$, other than ${\cal
P}_{10}$. The graph does not comply with the construction method
developed in \cite{mr}.

Two more snarks in $S_{\ge 5}$ where discovered by means of a
computer, both of order 30. These are obtained by replacing one of
the expansion graphs in $S_{28}$ by a path $P_3$. There are indeed
two non-isomorphic patterns to do so with cyclically
$4$-edge-connectivity retained. $P_3$ is one of the expansion
graphs of the snark on Figure \ref{small} (right).

\subsection{Constructions based on Lemma
\ref{cycle}}
\label{oncycle} This method seems similar to the one
obtained from Corollary \ref{odd1}. That similarity is somewhat
misleading. Here the initial graph $G$ is not arbitrary. The selection
of $G$ is restricted to previously constructed members of $F_{\ge
  5}$. On the other hand, this method is not limited to odd cycles and
the set $T$ of replacement capacities is richer. $T$ can either
contain a single capacity of amplitude $3$ or less, and there are six
distinct such capacities in our list of members of $\mbox{\it GI}_5$,
or it can consist of more than one set, such as
$T=\{(4,0)\cup(0,1),(4,1)\}$, $T=\{(1,2)\cup(3,4),(2,3),(1,4)\}$ and
many more. Needless to say, expansion graphs can be freely chosen
while applying this method, just as well. Lemma \ref{cycle}, because
it is weaker than Corollary \ref{odd1}, need to be recursively applied
to previously constructed graphs $G \in S_{\ge 5}$. Such a recursion
would be redundant for Corollary \ref{odd1}, where any initial graph
$G$ can be selected to start with. Yet, new members of $S_{\ge 5}$ can
replace the Petersen graph in producing distinct g-edges for the same
capacities, which opens many routes for multi-dimensional recursion
using both methods. Let us remark that the construction schema of
\cite{mr} is obtained from Lemma \ref{cycle}, starting initially with
the Petersen graph, using ${\cal P}^*_{10}(u,v)$ for edge replacement,
two isolated vertices (see Figure \ref{small}, center) as the only
expansion graph, and recursively applying the same technique to the
obtained graphs.

 \subsection{Various g-edges with the same $5$-capacity}
 \label{ongedges}
Quite obviously:
\begin{proposition} \label{phigk}
 Given an integer $k\ge 2$ and a graph $G$, $\phi_c(G)<k$ if and only if $0\in
\mbox{\it CP}_k(G_{u,v})$ for  a pair (equivalently
 all pairs) of vertices $u$ and $v$ of $G$.
\end{proposition}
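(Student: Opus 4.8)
The plan is to prove the equivalence by directly relating the existence of a sub-$k$-{\sc mcnzf} in $G$ to the value $0$ being an achievable flow on the auxiliary edge $e_0$ used in Definition~\ref{capacity}. By Proposition~\ref{modular}, $\phi_c(G)<k$ holds if and only if $G$ admits a sub-$k$-{\sc mcnzf}, that is, a modulo $k$ flow $f$ in some orientation of $G$ with $f:E(G)\to(1,k-1)$. So it suffices to show that such a flow exists if and only if $0\in\mbox{\it CP}_k(G_{u,v})$ for some (equivalently, every) pair $u,v$.

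First I would prove the direction from the capacity to the flow number. Suppose $0\in\mbox{\it CP}_k(G_{u,v})$ for some pair $u,v$. By Definition~\ref{capacity}, this means there is a modulo $k$ flow $f$ on $G\cup e_0$ with $f:E(G)\to(1,k-1)$ and $f(e_0)=0$. Since the flow on $e_0$ is $0$, deleting $e_0$ preserves the conservation condition at $u$ and $v$ (removing a zero-valued edge does not change any vertex balance). Hence the restriction of $f$ to $E(G)$ is itself a modulo $k$ flow in $G$ with all values in $(1,k-1)$, i.e.\ a sub-$k$-{\sc mcnzf}, so $\phi_c(G)<k$.

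Conversely, suppose $\phi_c(G)<k$, so $G$ has a sub-$k$-{\sc mcnzf} $f$ with $f:E(G)\to(1,k-1)$. Pick any pair of vertices $u,v$. Adding the edge $e_0$ with endvertices $u$ and $v$ and setting $f(e_0)=0$ yields a flow on $G\cup e_0$: the conservation condition still holds at every vertex, since at $u$ and $v$ we have only added an edge carrying zero flow. This flow witnesses $0\in\mbox{\it CP}_k(G_{u,v})$. Because $u,v$ were arbitrary, this establishes the ``all pairs'' strengthening, and combined with the previous paragraph it gives the full equivalence, including the parenthetical claim that one pair suffices iff all pairs do.

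I expect no serious obstacle here; the statement is flagged as ``quite obvious'' precisely because the only content is the bookkeeping observation that an edge carrying flow value $0$ in $\mathbb{R}/k\mathbb{Z}$ is invisible to the conservation equations, so it can be freely inserted or deleted. The one point warranting a line of care is the ``equivalently all pairs'' phrasing: the forward direction produces a witness for every pair simultaneously, while the reverse direction needs only a single pair, so the two implications together force the existence for one pair and for all pairs to be equivalent. I would state this explicitly rather than leave it implicit.
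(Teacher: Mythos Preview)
Your argument is correct and is exactly the unpacking of the word ``obvious'' that the paper leaves to the reader: the paper offers no proof beyond the prefatory ``Quite obviously'', and your two directions---deleting or inserting an edge carrying flow value $0$ does not disturb conservation, combined with Proposition~\ref{modular}---are precisely the intended content. Your explicit handling of the ``one pair vs.\ all pairs'' clause is a welcome clarification.
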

Consequently, any $A$-edge where $0 \notin A$ is a graph in
$F_{\ge5}$, and, if it has the right girth and connectivity, it
can be turned into a snark in $S_{\ge5}$, via vertex expansion. In
our list of sixteen sets from $\mbox{\it GI}_5$, there are {\bf
ten} that do not include $0$. Of the smaller g-edges representing
each capacity ($13$ are depicted in Figure \ref{gedges}), the only
one that meets the required cyclically $4$-edge-connectivity is
the Petersen graph, listed as a $(0,0)$-edge. Indeed ${\cal
P}_{10}\in S_{\ge5}$. It is cubic and no expansion is required
here. However, for each $A\in \mbox{\it GI}_5$, there are many
distinct $A$-edges of higher connectivity. One way to construct
such g-edges is the replacement of one, or more, simple edges, by
the $3$-edge-connected thick $(1,4)$-edge. Let us show one
detailed example: The {\bf Butterfly graph} (see Figure
\ref{butter}) is a $(1,4)$-edge with terminals $u$ and $v$. It is,
therefore, in $F_{\ge 5}$.  The graph is cyclically
4-edge-connected and can be turned into a (actually many) snark $S
\in S_{\ge 5}$,  by expansion of vertices of high degrees. Here is
how it is built: Starting with a thick $(1,4)$-edge, $G$ with
terminals $u$ and $v$, the subgraph $Q$, circled on the right
``wing" is a thick $(1,4)$-edge, which replaces a simple edge $xy$
of $G$. Similarly replaced is a simple edge $yt$ on the left wing.
Replacements of simple edges by any $(1,4)$-edges do not affect
the existence of sub-$5$-{\sc mcnzf}'s in a graph.

\medskip

\begin{figure}[htb]
\centering \includegraphics[width=8cm]{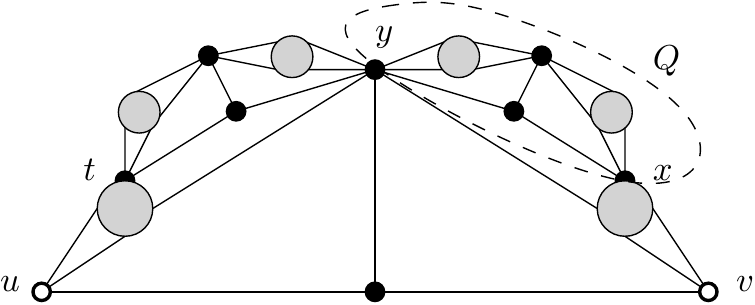} \caption{The
Butterfly $(1,4)$-edge.}\label{butter}
\end{figure}

Similarly, an unlimited set of alternative g-edges can be
constructed to represent each capacity $A\in \mbox{\it GI}_5$,
from which snarks $S \in S_{\ge 5}$ can be constructed, of all
shapes and sizes.

\medskip

Yet another simple observation: If an $A$-edge in a graph $G$ is
replaced by a $B$-edge with $B\subseteq A$, it does not give rise
to any sub-$5$-{\sc mcnzf} in the obtained graph $G'$, which does
not exist in $G$. In particular, if $G \in F_{\ge 5}$ so is $G'$.
It is easy to verify that, if an edge $uv$ is removed from a graph
$G \in F_{\ge
  5}$, then the capacity of the obtained g-edge $(G-uv)_{u,v}$ is either
$(4,1)$, or $\emptyset$. Considering the observation above, any such
g-edge can replace any $(4,1)$-edge to generate new members of $F_{\ge
  5}$, from previously generated ones.

\smallskip

One almost last peek into that seemingly vast bag of tricks: To
push any cubic graph (containing only simple edges) into $F_{\ge
5}$, in a single step, just replace two adjacent (simple) edges by
a pair of $(2,3)$ edges. The flow on the third edge incident with
their common endvertex is now restricted to $(2,3)+(2,3)=(4,1)$.
Yet, as that simple third edge is of capacity $(1,4)$, no
sub-$5$-{\sc cnzf} exists in the obtained graph, as promised. If
aiming toward snarks, the $(1,4)$ component of each of the
$(2,3)$-edges, should be thick, to provide the required
edge-connectivity.

\smallskip

We kept the simplest trick to the very end of the list: Insert a
single $\emptyset$-edge anywhere (make it thick to allow snarks).
No sub-$5$-flow is admitted anymore.

\subsubsection{Redundancy}\label{sec:red}
The construction methods described so far are by no means
independent. In fact, overlapping is rather wide and the same
graphs and snarks are generated in many ways. Here are a few of
many examples:

Let $(G-uv)_{u,v}$ be a g-edge, obtained by the removal of an edge
$uv$ from $G \in F_{\ge 5}$, as described in Section
\ref{ongedges}. If $uv$ belongs to an expansion subgraph $H$, used
as part of the construction of $G$, then any use of $(G-uv)_{u,v}$
as an edge, to construct a new graph is equivalent to replacing
$H$ by a larger expansion graph $H'$, when constructing $G$. Since
the selection of an expansion graph is arbitrary, $H'$ could have
been selected to start with. So using $(G-uv)_{u,v}$ as an edge
replacement is entirely redundant here.

Similarly, let $G'$ be obtained from a graph $G$ by means of
Corollary \ref{odd1}. When applying Lemma \ref{cycle} to $G'$, if
the cycle $C$ is contained in $G$, then the entire new
construction can be considered as part of a larger initial graph,
selected instead of $G$. Using Lemma \ref{cycle} in that case is,
therefore, redundant.

When elaborating on the usage of Lemma \ref{cycle} in Section
\ref{oncycle}, we did count $T=\{(2,3)\}$ among the relevant
subsets of $\mbox{\it GI}_5$. However, as specified on the
penultimate paragraph of Section \ref{ongedges}, two consecutive
$(2,3)$-edges turn any initial graph into $F_{\ge 5}$. Nothing is
gained by starting from a graph already in $F_{\ge 5}$ and
replacing an entire cycle.

When carefully checking the details, the last trick of Section
\ref{ongedges} (in its ``thick'' version) is equivalent to the selection of an arbitrary
graph, for expansion of a vertex of degree $5$, in the graph
depicted in Figure \ref{small}, left.

Despite these (and many other) causes of redundancy, the part of
 $S_{\ge 5}$ established by methods which were developed along this
article, appears to be pretty rich. Significant support to that
claim is provided in the next section.

\section{NP-Completeness}\label{sec:npc}
Less than a decade ago, the only known snark with circular flow
number $5$  was the Petersen graph. An infinite family of such
snarks was presented in \cite{mr}, yet, as we demonstrated in
previous sections, the entire collection $S_{\ge 5}$ of snarks
with flow number $5$ (or more?) is much richer than that. We now
utilize the power of our construction methods to show that $S_{\ge
5}$ is, in fact, rich and complex enough, to facilitate an
NP-completeness proof. In other words, that set of snarks is rich
and dense enough, so that every instance $I$ of any Co-NP problem,
can be emulated, by a snark $S \in S_{\ge 5}$, whose size and
complexity are of polynomial order, in comparison with the size of
$I$.
\begin{theorem} \label{snpc} Given an input snark $G$, deciding if
$\phi_c(G)<5$ is NP-Complete.
\end{theorem}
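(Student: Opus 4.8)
The plan is to prove NP-completeness by reducing from a known NP-complete problem—the natural candidate being a flow/coloring problem that the tools of Section~\ref{>=5} can emulate. The cleanest choice is to reduce from a Boolean satisfiability variant (or equivalently from the NP-complete problem of deciding nowhere-zero $3$-flow, or $3$-SAT directly), building a snark $G$ from an instance $I$ so that $I$ is satisfiable if and only if $\phi_c(G)<5$, i.e. $G$ admits a sub-$5$-{\sc mcnzf}. Membership in NP is the easy direction: a sub-$5$-{\sc mcnzf} is a polynomial-size certificate (an orientation together with rational flow values), and by Proposition~\ref{modular} its existence is exactly equivalent to $\phi_c(G)<5$; one checks the flow-conservation equations in $\mathbb{R}/5\mathbb{Z}$ in polynomial time. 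So the substance is the hardness reduction.

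First I would fix a correspondence between truth values and flow values using the structured $5$-capacities catalogued in Section~\ref{>=5}. The key gadgets are g-edges whose capacity is a \emph{disconnected} symmetric union of intervals, so that a feasible sub-$5$-flow through them is forced into one of two ``clusters''—these clusters encode \textbf{true}/\textbf{false}. For instance $(4,1)\cup(2,3)$, $(1,2)\cup(3,4)$, and $(3,0)\cup(0,2)$ each split the admissible flow into two disjoint interval-families, and Lemma~\ref{measure2} together with Corollaries~\ref{odd1} and~\ref{odd} gives precise control over how such bi-valued signals propagate and interact along paths and cycles. I would design variable gadgets that broadcast a consistent Boolean value along several output g-edges (using measure-$2$ capacities to enforce the alternation/consistency of Lemma~\ref{measure2}), and clause gadgets that are satisfiable (admit a sub-$5$-flow) exactly when at least one incident literal carries the satisfying value. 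The addition rule $A+B$ at a degree-$3$ vertex (Proposition~\ref{parser}) is the arithmetic engine: by choosing capacities whose sum-constraints at a junction forbid the ``all-false'' configuration, the clause gadget enforces the OR.

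Next I would assemble these gadgets into a single cubic graph and certify that it is genuinely a snark. This is where the earlier machinery is indispensable and, I expect, where the main obstacle lies. Each abstract $A$-edge must be realized by a concrete g-edge (Section~\ref{ongedges}), and to guarantee cyclic $4$-edge-connectivity and girth~$\ge5$ I would instantiate the g-edges using \textbf{thick} $(1,4)$-edges wherever simple edges would otherwise create short cycles or small cuts, and finish by expanding every high-degree vertex into a suitable graph $X$ (Proposition~\ref{exp}), repairing residual small edge-cuts by routing the attaching edges appropriately, exactly as described in Section~\ref{explicit}. Since expansion never decreases $\phi_c$ and replacing a simple edge by a $(1,4)$-edge never affects the existence of a sub-$5$-{\sc mcnzf}, the flow-theoretic behaviour of the assembled snark matches that of the abstract gadget-graph. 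The overall blow-up in size is polynomial (each gadget is a fixed-size graph, one per variable and per clause, plus bounded-size expansions), so the reduction runs in polynomial time.

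The hard part will be the simultaneous satisfaction of three competing requirements: the gadgets must (i) faithfully encode the Boolean logic through the algebra of $\mbox{\it GI}_5$, (ii) assemble into a cubic graph that is provably cyclically $4$-edge-connected with girth at least $5$—so that it is a bona fide snark rather than merely a cubic graph in $F_{\ge 5}$—and (iii) introduce no \emph{unintended} sub-$5$-flow that would let an unsatisfiable instance slip through. Controlling (iii) means verifying, via the intersection/addition calculus, that the only feasible flows are the ones corresponding to genuine truth assignments; this soundness check is the delicate combinatorial core. As the excerpt itself emphasizes, forcing the construction to yield actual snarks (rather than relying on low-girth, low-connectivity graphs as other NP-hardness proofs do) is the distinctive difficulty, so I would expect most of the proof's length to be consumed by the connectivity-and-girth bookkeeping for the thick g-edges and expansions.
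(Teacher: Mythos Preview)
Your high-level architecture---gadget reduction using measure-$2$ capacities to carry a two-valued signal, then thickening and expanding to force the result to be a snark---is indeed the paper's strategy. But two points in your plan are genuinely off, and one of them is a real gap.

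First, the choice of source problem. You propose $3$-SAT with an OR clause gadget (``forbid the all-false configuration''). But every capacity in $\mbox{\it SI}_5$ is symmetric, so any junction constraint that rules out all-false automatically rules out all-true as well; you cannot realise an asymmetric OR with these tools. The paper accordingly reduces from \emph{hypergraph $2$-colourability} (equivalently NAE-SAT), whose clause condition ``not all three the same'' respects the symmetry. The concrete clause gadget is a $6$-cycle of simple $(1,4)$-edges; the soundness check is the pigeonhole observation that if three consecutive increments all lie in $(1,2)$ (or all in $(3,4)$), then among four successive partial sums one must land in $(4,1)$.

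Second, and more seriously, your snark-ification argument is one-sided. You write that ``expansion never decreases $\phi_c$ \ldots\ so the flow-theoretic behaviour of the assembled snark matches that of the abstract gadget-graph.'' Proposition~\ref{exp} only gives $\phi_c(G')\ge\phi_c(G)$: it protects the no-instances but says nothing about yes-instances. After you expand every high-degree vertex, you must still show that a sub-$5$-{\sc mcnzf} survives. This is exactly the delicate step; the paper does not design the expansion graphs explicitly but instead invokes a result of Lukot'ka and \v{S}koviera (stated here as Lemma~\ref{lokutllema}/\ref{lokutmod}): given a fixed sub-$5$-flow $f$ on $G$, each high-degree vertex admits an expansion through which $f$ extends. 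Because the gadget graph is built from identical ``links'', only a bounded number of expansion patterns are needed, so the whole conversion is linear-time. Your proposal also omits the check that $\phi_c(G(H))>4$ regardless of the instance, which is required for the output to be a snark at all; the paper handles this via the closed $4$-capacity of ${\cal P}^*_{10}(u,v)$.
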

We first prove a somewhat weaker version, where no restrictions,
related to connectivity, girth, or vertex degrees are imposed on
the input graph:

\begin{lemma}\label{NPC}
 Given any input graph $G$, deciding if
$\phi_c(G)<5$ is NP-Complete.
\end{lemma}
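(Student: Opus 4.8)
The plan is to establish NP-completeness by a reduction from a known NP-complete problem, using the toolbox of $\mbox{\it GI}_5$ g-edges developed in Section~\ref{>=5}. First I would verify membership in NP: given a graph $G$, a certificate that $\phi_c(G)<5$ is simply a sub-$5$-{\sc mcnzf}, that is, an orientation $D$ together with rational flow values $f(e)\in(1,4)$ summing to zero modulo $5$ at every vertex. By Proposition~\ref{modular} such an $f$ exists if and only if $\phi_c(G)<5$, and one can check in polynomial time that $f$ is a valid flow with every value in the open interval $(1,4)$, so the problem is in NP.

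For the hardness reduction I would encode Boolean logic into the flow-value algebra, exploiting that the natural ``true/false'' dichotomy is already present in the capacities of measure $2$. The key idea is that along a path of $A$-edges with $\mbox{Me}(A)=2$ and simple third edges at each internal vertex, Lemma~\ref{measure2} forces the flow values to \emph{alternate} between the two unit intervals of $A$; hence a single such path (or a configuration of g-edges) behaves like a Boolean variable that can take exactly two states. The natural source problem is therefore a variant of satisfiability (NAE-3SAT or a planar/cubic CSP), whose clauses I would realize by small gadgets assembled via the \textbf{parallel join} ($A+B$) and \textbf{serial join} ($A\cap B$) of Proposition~\ref{parser}: intersection lets a gadget simultaneously impose several capacity constraints, and addition lets flow values combine at a vertex, so that, for example, two $(2,3)$-edges meeting at a vertex force the third incident edge into $(2,3)+(2,3)=(4,1)$, which a simple $(1,4)$-edge cannot realize. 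I would design a variable gadget with two stable flow states and a clause gadget that admits a consistent sub-$5$-{\sc mcnzf} precisely when the clause is satisfied, wiring them together through thick $(1,4)$-edges so that no spurious flows leak between gadgets.

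The crux is to show the reduction is \emph{faithful and polynomial}: that the assembled graph $G$ admits a sub-$5$-{\sc mcnzf} if and only if the underlying Boolean instance is satisfiable, and that $G$ has size polynomial in the instance. The forward direction is routine once the gadgets are fixed—a satisfying assignment dictates the interval (true/false) chosen on each variable path, and one reads off compatible values on all edges. The hard part will be the converse, namely ruling out ``cheating'' flows that do not correspond to any consistent assignment: I must argue, using Lemma~\ref{measure2} and the fact that each g-edge's admissible values lie in its prescribed capacity $A\in\mbox{\it GI}_5$ (so that $f(e)\in\mbox{\it CP}_5(e)$ for every edge, by the characterization following Lemma~\ref{cpk}), that every sub-$5$-{\sc mcnzf} restricts to one of the two intended states on each variable gadget and propagates consistency across all clause gadgets. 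Establishing this rigidity—showing the capacities I chose admit \emph{no} unintended interval combination—is where the careful case analysis lives, and it is the main obstacle. Since this lemma deliberately drops the snark constraints (connectivity, girth, cubicity), I am free to use g-edges of any structure and to place vertices of arbitrary degree, which keeps the gadgets small; upgrading to genuine snarks for Theorem~\ref{snpc} is then handled separately by the expansion and thickening techniques of Section~\ref{explicit}.
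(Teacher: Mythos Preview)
Your proposal is correct and essentially matches the paper's approach: the paper reduces from hypergraph $2$-coloring (equivalently, monotone NAE-$3$SAT), realizing each variable by a cycle of $(1,2)\cup(3,4)$-edges whose two alternating flow states (via Lemma~\ref{measure2}) encode the two colors, and each triplet by a $6$-cycle of simple $(1,4)$-edges whose flow forces the three incoming connector values not all to lie in the same unit interval. Your outline anticipates exactly this structure, including the use of measure-$2$ capacities and Lemma~\ref{measure2} as the rigidity mechanism, and the identification of the converse (``no cheating flows'') direction as where the real work lies.
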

\begin{proof}
A 3-hypergraph $H$ consists of a finite set $X$ of {\bf nodes} and
a collection of three element subsets of $X$, called {\bf
triplets}. The following problem is known to be NP-complete: Given
a 3-hypergraph $H$, can the set of nodes be partitioned into two
``color" sets, so that no triplet contains three nodes of the same
color. If such a partition exists, then $H$ is said to be {\bf
2-colorable}.

Given a 3-hypergraph $H$, we show that a graph $G(H)$ can be
constructed, in polynomial time, such that $\phi_c(G(H))<5$ if
and only if $H$ is $2$-colorable.

\bigskip

\noindent {\bf Constructing $G(H)$.} Each node $x$ is represented by a {\bf node-cycle}
$C(x)$. The length of $C(x)$
 is twice the number of triplets that contain $x$. The edges of $C(x)$ are all of capacity
 $(1,2)\cup(3,4)$. The vertices along $C(x)$ are alternately referred to as
 positive and negative terminals of $C(x)$.
 \noindent

 Each triplet $T$ is represented by a {\bf triplet-cycle} $C(T)$, which consists of
 six simple $(1,4)$-edges, and six vertices $(T^+_1,T^+_2,T^+_3,T^-_3,T^-_2,T^-_1)$ in that
 (cyclic) order.

 Every occurrence of a node $x$ in a triplet $T$ is represented by
 two {\bf connector edges} of capacity $(1,2)\cup(3,4)$, one
between a positive terminal of $C(x)$ and a vertex $T^+_i$ of
$C(T)$, and the other one between a negative terminal of $C(x)$
and
 the vertex $T^-_i$ (same index $i$) of $C(T)$. Each vertex of
every node-cycle and every triplet-cycle is the endvertex of
exactly one connector.

\smallskip

 We choose, for reference, an orientation of
$H$ where all node-cycles and triplet-cycles are directed
cyclically, and the connectors are directed from node-cycles
toward triplet-cycles.

We split the main statement into two separate propositions.
\begin{proposition}\label{c2f}
If $H$ is $2$-colorable then $\phi_c(G(H))<5$.
\end{proposition}
\begin{proof}
Let $(X_1,X_2)$ be a partition of $X$ which yields a $2$-coloring
of $H$, that is, every triplet $T$ includes one node from one set
of the partition and two nodes from the other set.

To define a sub-$5$-{\sc mcnzf} $f$ in $G(H)$: We select a positive
number $0<\epsilon<1/6$ and assign $f$ values to the edges along
every node-cycle, alternately $2-\epsilon$ and $-(2-\epsilon)$.
Consequently, the $f$ values of the connectors incident with the
terminals of each node-cycle are alternately $t$ and $-t$, where
$t=1+2\epsilon$.

If $x \in X_1$, then we set $f(p)=t$, for every connector edge $p$
incident with a positive terminal of $C(x)$, and $f(n)=-t$ for
every connector $n$ incident with a negative terminal.

Conversely, if $x \in X_2$, then $f(p)=-t$, for every connector
edge $p$ incident with a positive terminal of $C(x)$, and $f(n)=t$
for every connector $n$ incident with a negative terminal. Observe
that $\pm(2-\epsilon)$ and $\pm(1+2\epsilon)$ indeed belong to the
capacity $(1,2)\cup(3,4)$.

It remains to define $f$ on the edges of the triplet cycles.
Consider a triplet $T$. Since $(X_1,X_2)$ yields a 2-coloring, the
flow $f$ equals $t$ on one of the three connectors incident with
$T^+_1,T^+_2,T^+_3$, and $f$ equals $-t$ on the other two, or vice
versa. Either way, the (cyclic) sequence of flow values on the six
connectors incident with vertices of $C(T)$ is either
$(t,-t,t,-t,t,-t)$,  or $(t,t,-t,t,-t,-t)$, or obtained from the
second by reversing the order (Note that, as the sequence is
cyclic, it does not necessarily start at $T^+_1$, but at any
conveniently selected ``first" vertex).

We now assign $f$ value $t$ to the ``closing" edge of $C(T)$, that
is, the edge going from the ``sixth" vertex to the ``first" one on
the sequence, as listed above. The $f$ values of the edges along
$C(T)$ then become $(t,2t,t,2t,t,2t)$, or $(t,2t,3t,2t,3t,2t)$.
Recall that $t=1+2\epsilon$ with $0<\epsilon<1/6$. Consequently,
$t,2t$ and $3t$ are all valid flow values in $(1,4)$. That remains
true also if the order is reversed and the obtained values become
$-t,-2t$ and $-3t$. For every edge $e$ of $G(H)$, the flow $f$
satisfies $f(e)\in \mbox{\it CP}_5(e)$, so it is indeed a
sub-$5$-{\sc mcnzf} in $G(H)$.
\end{proof}

\begin{proposition}\label{f2c}
If $\phi_c(G(H))<5$ then $H$ is $2$-colorable.
\end{proposition}
\begin{proof}
Let $f$ be a sub-$5$-{\sc mcnzf} in $G(H)$. By Lemma
\ref{measure2}, the flow values on the edges along each node-cycle
$C(x)$ alternately belong to $(1,2)$ and to $(3,4)$. Consequently,
the values on the connectors incident with terminals of $C(x)$
alternately belong to $(1,3)$ and to $(2,4)$. As the capacity of a
connector is $(1,2)\cup(3,4)$, the actual values alternate between
$(1,2)$ and $(3,4)$. Let $X_1$ be the set of nodes $x$, for which
$f(p)\in (1,2)$ on the connectors $p$, incident with positive
terminals of $C(x)$, and let $X_2$ be the set of nodes $x$, for
which $f(p)\in (3,4)$ on the connectors $p$, incident with
positive terminals of $C(x)$. We claim that $(X_1,X_2)$ yields a
$2$-coloring of $H$. Assume, to the contrary, that this is not the
case, then there exists a triplet $T$ such that the three $f$
values $d_1,d_2,d_3$ on the connectors incident with
$T^+_1,T^+_2,T^+_3$, all belong to the same unit interval, say, to
$(1,2)$. In that case, at least one of the four $f$ values,
$a,a+d_1,a+d_1+d_2,a+d_1+d_2+d_3$ (on the four consecutive edges
of $C(T)$, starting with $f(T^-_3T^+_1)=a$), belongs to $(4,1)$,
in contradiction to $f$ being a sub-$5$-{\sc mcnzf}. The same
contradiction holds if $d_1,d_2$ and $d_3$ belong to $(3,4)$.
\end{proof}
Propositions \ref{c2f} and \ref{f2c} are combined to yield Lemma
\ref{NPC}.
\end{proof}

We are now set to prove the stronger Theorem \ref{snpc}.

\begin{proof}
Theorem \ref{snpc} is proved by converting the graph $G(H)$,
described in the proof of Lemma \ref{NPC}, in polynomial time,
into a snark $S(H)$, such that $\phi_c(S(H))<5$ if and only if
$\phi_c(G(H))<5$.

As a snark, $S(H)$ should be of the right girth and connectivity.
We should be, therefore, more specific about the structure of
$G(H)$. Recall that a $(1,2)\cup(3,4)$-edge is a serial join of a
$(3,2)$-edge and a $(1,4)$-edge. We use the minimal $(3,2)$-edge
and the thick $(1,4)$-edge (see Figure \ref{gedges}, top right)
for the edges along each node-cycle. For connectors, on the other
hand, we use a simple edge as the $(1,4)$ component. Also, of the
two terminals of a connector, the one which belongs to the simple
edge is selected to be attached to the triplet-cycle. That way,
every vertex of every triplet-cycle is incident with three simple
edges and it is of genuine (not ``generalized") degree $3$. A part
of a node cycle and its incident connectors is depicted in
Figure~\ref{link}. It is easily verified that $G(H)$ is now
cyclically $4$-edge-connected and of girth at least $5$. As a
snark, the circular flow number of $S(H)$ should be larger than
$4$. To guarantee that, we first prove:
\begin{proposition}
Regardless of the 3-hypergraph $H$, being $2$-colorable or not,
$\phi_c(G(H))>4$.
 \end{proposition}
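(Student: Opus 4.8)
The plan is to establish the equivalent statement that $G(H)$ admits no nowhere-zero $4$-flow; since the circular flow number of a finite graph is attained, a graph has a nowhere-zero $4$-flow precisely when $\phi_c\le 4$, so ruling this out gives $\phi_c(G(H))>4$. By Tutte's classical equivalence between integer and group-valued flows, a nowhere-zero $4$-flow exists if and only if there is a nowhere-zero flow valued in the group $\mathbb{Z}_2\times\mathbb{Z}_2$ (the usual link with $3$-edge-colourings); I would work with the latter. Thus it suffices to show that $G(H)$ carries no nowhere-zero $\mathbb{Z}_2\times\mathbb{Z}_2$-flow, and the whole argument will localize to a single thick $(1,4)$-edge.

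First I would record that, whenever $H$ has at least one triplet (the remaining cases being trivial), the refined graph $G(H)$ contains a thick $(1,4)$-edge $Q$: each node-cycle edge was built as a serial join of a minimal $(3,2)$-edge and such a thick $(1,4)$-edge, and every node lying in some triplet yields a node-cycle of positive length. I would fix one such $Q$ and write it as the copy of $K_4$ on $\{u,v,c,d\}$ in which the two triangle edges $uc$ and $vc$ are the $(4,1)$-edges ${\cal P}^*_{10}$, the third triangle edge $uv$ is deleted, and $ud,vd,cd$ are simple edges. Here $c$ is an internal (non-terminal) vertex of $Q$, so ordinary flow conservation holds at $c$ even when $Q$ sits inside $G(H)$; its three incident objects are the two g-edges $uc,vc$ and the simple edge $cd$.

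The central observation concerns the behaviour of ${\cal P}^*_{10}$ under $\mathbb{Z}_2\times\mathbb{Z}_2$-flows. Suppose some nowhere-zero $\mathbb{Z}_2\times\mathbb{Z}_2$-flow of $G(H)$ sent a nonzero value $\phi$ through one of the copies of ${\cal P}^*_{10}$ inside $Q$. Restoring the deleted edge between the two terminals of that copy, carrying the value $\phi\ne 0$, would extend the restricted flow to a flow of the full Petersen graph in which every one of the $15$ edges is nonzero, i.e. a nowhere-zero $\mathbb{Z}_2\times\mathbb{Z}_2$-flow of ${\cal P}_{10}$. No such flow exists, since $\phi_c({\cal P}_{10})=5>4$. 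Hence the value pushed through each of the two copies of ${\cal P}^*_{10}$ in $Q$ is forced to be $0$; note that I only need this ``$\subseteq\{0\}$'' direction, so I need not exhibit any flow realizing the value $0$.

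Finally I would apply flow conservation at the internal vertex $c$: the two through-values of $uc$ and $vc$ are both $0$ by the previous step, so conservation reads $0+0+r=0$, forcing the value $r$ on the simple edge $cd$ to be $0$ and contradicting the requirement that $cd$ carry a nonzero value. Therefore $G(H)$ has no nowhere-zero $\mathbb{Z}_2\times\mathbb{Z}_2$-flow, hence no nowhere-zero $4$-flow, and $\phi_c(G(H))>4$ regardless of whether $H$ is $2$-colorable. The only delicate point — the ``main obstacle'', though a mild one — is the justification that no nonzero value can traverse ${\cal P}^*_{10}$; this rests entirely on the Petersen graph having no nowhere-zero $4$-flow, combined with the structural fact that inside a thick $(1,4)$-edge the two Petersen gadgets meet exactly at the degree-three vertex $c$ whose third edge is a plain edge.
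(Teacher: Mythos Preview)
Your argument is correct and rests on the same core idea as the paper's: since ${\cal P}_{10}$ has no nowhere-zero $4$-flow, only the value $0$ can be pushed through each copy of ${\cal P}^*_{10}$, and this forces some simple edge of $G(H)$ to carry the value $0$. The implementations differ in two respects. First, the paper works in $\mathbb{R}/4\mathbb{Z}$ and needs an auxiliary step (the closed-interval analogue of Lemma~\ref{cpk}) to pass from ``disjoint from $[1,3]$'' to ``equal to $\{0\}$''; your use of $\mathbb{Z}_2\times\mathbb{Z}_2$ makes this immediate, since ``not a nonzero element'' already means ``zero''. Second, the paper localizes to a \emph{connector}, whose $(3,2)$-component is the parallel join of two copies of ${\cal P}^*_{10}$ followed in series by a simple edge, whereas you localize to a \emph{thick $(1,4)$-edge}, where two copies of ${\cal P}^*_{10}$ meet at the internal vertex $c$ together with the simple edge $cd$. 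Both localizations are valid and yield the same contradiction; your version is marginally more self-contained, while the paper's stays within the modular-flow framework used throughout.
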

 \begin{proof}
 Recall the definition of  ${\cal P}^*_{10}(u,v)$, as the g-edge obtained from the Petersen graph ${\cal
 P}_{10}$, by the removal of an edge $uv$. The range of a $4$-{\sc mcnzf} is the closed interval $[1,3]$ modulo
 $4$. The Petersen graph ${\cal P}_{10}$ does not
 admit a $4$-{\sc mcnzf}. As a result, the set of modulo $4$ flow values that can be
 ``pushed" through ${\cal P}^*_{10}(u,v)$ with the flow on each edge restricted to
 $[1,3]$ is disjoint from $[1,3]$. Since that set is a union of {\bf closed} unit
 intervals of $\mathbb{R}/4\mathbb{Z}$ (following the same argument that leads to
 Lemma \ref{cpk}), it consists solely of the point $0$. The same holds
 for the parallel join of two such g-edges, which forms the
 $(3,2)$ component of each connector of $G(H)$. As a result, a
 modular-$4$-flow on the simple edge component of a connector is also
 restricted to $0$, so no $4$-{\sc mcnzf} is possible.
 \end{proof}

It remains to take care of vertices $v$ with degree $d(v)>3$. That is
achieved, by an expansion of each such vertex $v$, into a cubic
subgraph. By Proposition \ref{exp}, an expansion of a vertex never
decreases the circular flow number. The tricky part is to guarantee
$\phi_c(S(H))<5$ whenever $\phi_c(G(H))<5$. Luckily, the structure of
each node-cycle is highly symmetric and it is basically the same for
every node-cycle and every graph $G(H)$.  Figure \ref{link}
illustrates the basic component of which $G(H)$ is made -- A
$(1,2)\cup(3,4)$-edge of a node-cycle and the incident connector. We
refer to that subgraph as a {\bf link}. All links in all graphs $G(H)$
are isomorphic. A link includes four vertices of degree larger than 3,
labeled on Figure \ref{link} as $x$ of degree $11$, $y$ of degree $7$,
$z$ of degree $5$ and $w$ of degree $5$. Vertices $x$, $y$, $z$, and
$w$ will be respectively replaced by four expansion graphs $X$, $Y$,
$Z$ and $W$.

\begin{figure}[htb]
\centering \includegraphics[width=12cm]{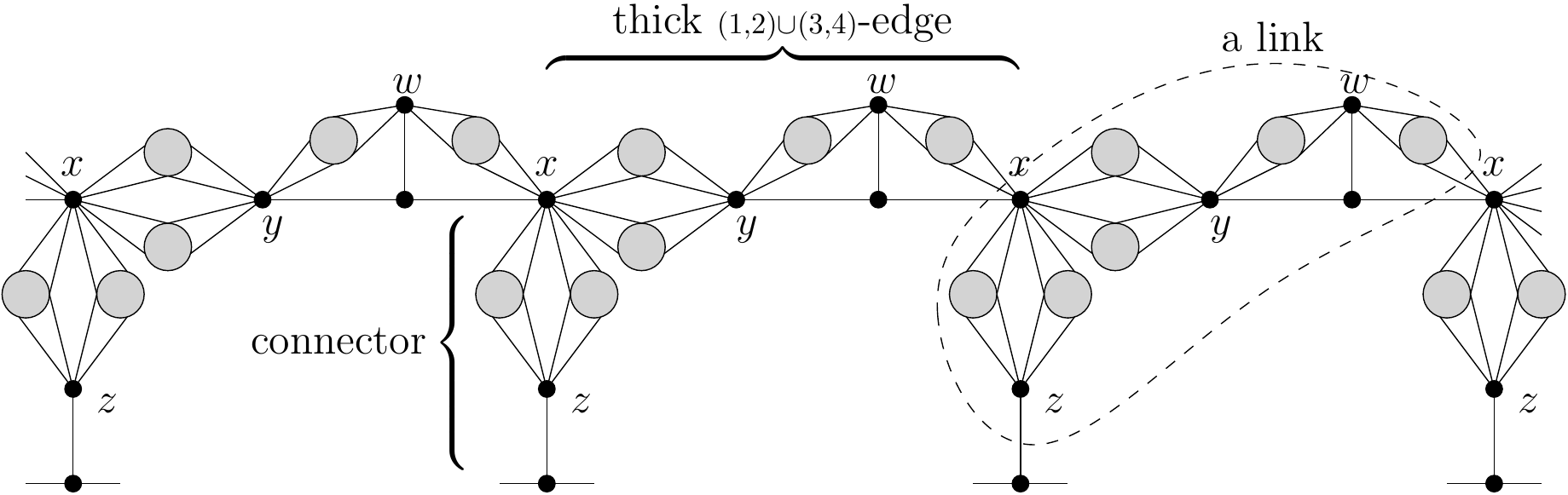} \caption{A segment
of a node-cycle with incident connectors}\label{link}
\end{figure}

 Consider the sub-$5$-{\sc mcnzf} $f$ defined in the proof of Proposition
 \ref{c2f}. After selecting $0<\epsilon<1/6$, say,
 $\epsilon=1/12$, $f$ is fully defined, up to switching the sign
 of all $f$-values on a node cycle and the incident connectors (by
 switching the roles of its positive and negative terminals). The flow $f$ is then defined for every edge
 of a link, and it is the same (up to reversing all signs) for
 all links. The expansion graphs $X$, $Y$, $Z$ and $W$ should be
 designed to allow a sub-$5$-{\sc mcnzf}, which maintains the $f$ values on
 all original edges of the link. An expansion graph suitable for
 that task, also clearly fits if all flow values on the incident
 edges switch their signs. Consequently, all we need are {\bf four
 fixed} expansion graphs $X$, $Y$, $Z$ and $W$, to be used for all links
 and convert every graph $G(H)$ into a snark $S(H)$, as required,
 in linear time.

 Explicit construction and presentation of these four graphs are certainly doable.
 However, that somewhat tedious mission, can be spared here. It suffices to show that such expansion
 graphs do exist, and that claim was already
 proved, in a more general setting.

In \cite{lokut}, Sections $3$ and $4$, the authors present a
procedure to convert a cyclically $4$-edge-connected graph $G$, of
girth at least $5$ and such that $4<\phi_c(G)<5$, into a snark $S$
with the same circular flow number. Starting with a given $r$-{\sc
cnzf} $f$ in $G$, they remove each vertex $v$ of degree $d(v)>3$
and reroute the edges incident with that vertex, through a certain
``network", which makes the obtained graph $S$ a snark. The
structure of each such network is derived from the values of $f$
on the edges incident with $v$.
 A new $r$-{\sc cnzf} is defined in $S$, whose restriction to the original edges
 of $G$ is the given flow $f$.
 The following lemma summarizes that part of \cite{lokut}, although it is
not explicit in that article:
\begin{lemma}\label{lokutllema}
 Let $g$ be an $r$-{\sc cnzf} in a cyclically $4$-edge-connected graph $G$,
 of girth at least $5$ and such that $4<\phi_c(G)<5$. By means of
 expansion of each vertex of degree larger than $3$ (possibly
 accompanied by smoothing of vertices of degree $2$), a snark $S$ is
 obtained from $G$. There exists an $r$-{\sc cnzf} $g_1$ in $S$, such that
 $g_1(e)=g(e)$ for every edge $e \in E(G)$.
\end{lemma}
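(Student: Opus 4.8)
The plan is to reduce the global claim to a local, vertex-by-vertex construction and then invoke the gadget procedure of \cite{lokut}. Because the expansions at distinct vertices of degree larger than $3$ act on pairwise disjoint parts of $G$, it suffices to handle one such vertex $v$ at a time. Fix the reference orientation and let $e_1,\dots,e_d$ (with $d=d(v)>3$) be the edges incident with $v$, carrying the prescribed values $g(e_1),\dots,g(e_d)$; conservation at $v$ imposes exactly one linear relation among these values. The local goal is to build a cubic \emph{network} $N_v$ with $d$ terminals, one per incident edge, that admits an $r$-{\sc cnzf} whose values on the $d$ attaching half-edges are exactly the $g(e_i)$.

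For the skeleton of $N_v$ the natural first attempt is to split $v$ into a path (or tree) of $d-2$ new degree-$3$ vertices: attach $e_1,e_2$ to the first vertex, one of $e_3,\dots,e_{d-1}$ to each interior vertex, and $e_{d-1},e_d$ to the last, so that every internal edge is forced to carry a partial sum of the $g(e_i)$. The difficulty is that such a partial sum, read as an element of $\mathbb{R}/r\mathbb{Z}$, need not lie in the admissible flow range $[1,r-1]$. Wherever it does not, one replaces the offending simple internal edge by a suitable g-edge whose capacity does contain the required value, so that an internally valid $r$-{\sc cnzf} on $N_v$ with the prescribed boundary still exists. One must in addition enlarge and interconnect these components so that the finished network is cubic, contains no cycle shorter than $5$, and creates no edge-cut violating cyclic $4$-edge-connectivity. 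Realizing this for every possible pattern of boundary values is the technical core, and it is exactly the construction carried out in Sections~$3$ and~$4$ of \cite{lokut}, whose networks are assembled from the local flow values of the given flow.

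With the local networks in hand I would define $g_1$ by keeping $g$ on every edge of $E(G)$ and using, in the interior of each $N_v$, the internal $r$-{\sc cnzf} supplied above. These agree on the shared half-edges by construction, so $g_1$ is a globally consistent $r$-{\sc cnzf} in $S$ with $g_1(e)=g(e)$ for all $e\in E(G)$. It then remains only to check that $S$ is a snark. Cubicity, girth at least $5$, and cyclic $4$-edge-connectivity are structural outputs of the construction, while the single remaining flow condition $\phi_c(S)>4$ comes for free: since $S$ is obtained from $G$ by vertex expansion, Proposition~\ref{exp} gives $\phi_c(S)\ge\phi_c(G)>4$.

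I expect the main obstacle to be the simultaneous control of girth and cyclic $4$-edge-connectivity while forcing the internal flow to realize arbitrary prescribed boundary data: the partial-sum values on internal edges are dictated by the $g(e_i)$ and cannot be chosen freely, so the remedy of inserting high-girth, flow-adjusting g-edges is what makes the explicit networks intricate. Since these networks are already built in \cite{lokut} and depend only on the local flow values, I would present the lemma as an extraction of that construction rather than reconstruct it in full.
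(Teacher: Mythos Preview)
Your proposal is correct and matches the paper's treatment: the paper does not prove this lemma at all but simply presents it as a summary of the construction in Sections~3 and~4 of \cite{lokut}, with essentially the same explanation you give (local replacement of each high-degree vertex by a cubic network tailored to the incident flow values, yielding a snark with an $r$-{\sc cnzf} extending $g$). If anything, your sketch is more detailed than the paper's, but your bottom line---treat the lemma as an extraction from \cite{lokut} rather than reprove it---is exactly what the paper does.
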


Lemma \ref{lokutllema} relates to a (real valued) $r$-{\sc cnzf}. For
our purpose, it should be reformulated in terms of modular flows.

Let $f$ be an $r$-{\sc mcnzf} in (a reference orientation $D_0$ of) $G$.
By Proposition \ref{mod}, there exists an $r$-{\sc cnzf} $g$, on an
orientation $D$ of $G$, such that, for every edge $e$ of $G$,
$g(e)\equiv f_D(e) \mbox{ modulo } r$.

 If $G$ is cyclically $4$-edge-connected, of girth at least $5$ and
such that $4<\phi_c(G)<5$, then Lemma \ref{lokutllema} asserts the
existence of expansion graphs, one for every vertex $v$ of degree
$d(v)>3$, which turn $G$ into a snark $S$ with an $r$-{\sc cnzf}
$g_1$, whose restriction to $E(G)$ equals $g$. Let $\tilde{f}=(g_1
\mbox{ modulo } r)$. That is, $\tilde{f}$ is an $r$-{\sc mcnzf} in
$S$, obtained from $g_1$ by interpreting each $g_1(e)$ as a
residue in $\mathbb{R}/r\mathbb{Z}$. The restriction of
$\tilde{f}$ to $G$ is clearly $f_D$. The orientation does not
really matter. For simplicity's sake we can switch back to the
reference orientation $D_0$ and the flow $f$.

In summary:
\begin {lemma}
\label{lokutmod} Let $f$ be an $r$-{\sc mcnzf} in a cyclically
$4$-edge-connected graph $G$, of girth at least $5$ and such that
$4<\phi_c(G)<5$.
By means of expansion of each vertex of degree larger than $3$ (possibly accompanied by smoothing of vertices
of degree $2$), a snark $S$
is obtained from $G$. There exists an $r$-{\sc mcnzf} $\tilde{f}$ in $S$,
such that $\tilde{f}(e)=f(e)$ for every edge $e \in E(G)$.
\end{lemma}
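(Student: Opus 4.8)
The plan is to obtain Lemma~\ref{lokutmod} as a direct translation of the real-valued statement Lemma~\ref{lokutllema}, using Proposition~\ref{mod} as the dictionary between modular and genuine flows. The hypotheses imposed on $G$ (cyclic $4$-edge-connectivity, girth at least $5$, and $4<\phi_c(G)<5$) are exactly those required by Lemma~\ref{lokutllema}, so no additional work is needed to make that lemma applicable; the entire content of the argument is already carried by \cite{lokut}, and what remains is a change of vocabulary from real-valued to modular flows.

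First I would fix the reference orientation $D_0$ in which the given $r$-{\sc mcnzf} $f$ is defined, and invoke Proposition~\ref{mod} to produce an orientation $D$ of $G$ together with a genuine (real-valued, positive) $r$-{\sc cnzf} $g$ on $D$ satisfying $g(e)\equiv f_D(e)\pmod{r}$ for every edge $e$ of $G$. This replaces the modular object $f$ by a real-valued object $g$ on the same underlying graph. I would then apply Lemma~\ref{lokutllema} to the pair $(G,g)$: it returns, for each vertex $v$ with $d(v)>3$, an expansion graph (possibly accompanied by smoothing of degree-$2$ vertices) turning $G$ into a snark $S$, together with an $r$-{\sc cnzf} $g_1$ on $S$ whose restriction to $E(G)$ equals $g$.

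Next I would set $\tilde f=(g_1\bmod r)$, reducing every value of $g_1$ to its residue in $\mathbb{R}/r\mathbb{Z}$. Since reducing a real-valued flow modulo $r$ preserves the conservation equations, and since an $r$-{\sc cnzf} takes values in $[1,r-1]$ which remains $[1,r-1]\subseteq\mathbb{R}/r\mathbb{Z}$ after reduction, $\tilde f$ is an $r$-{\sc mcnzf} in $S$. Because $g_1$ restricts to $g$ on $E(G)$ and $g\equiv f_D\pmod{r}$, the restriction of $\tilde f$ to $E(G)$ is precisely $f_D$.

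The one step that genuinely deserves care, and the only place where something could go wrong, is the orientation bookkeeping: $g$, and hence $\tilde f$, lives on the orientation $D$ chosen by Proposition~\ref{mod}, whereas the conclusion is phrased with respect to the reference orientation $D_0$ in which $f$ was given. To finish I would reverse, on $S$, exactly those edges of $E(G)$ whose directions differ in $D_0$ and $D$, negating $\tilde f$ on each such edge; by the very definition of $f_D$ this converts the restriction $\tilde f|_{E(G)}=f_D$ back into $f$, yielding $\tilde f(e)=f(e)$ for every $e\in E(G)$, as required. Everything downstream of Lemma~\ref{lokutllema} is mechanical, so the genuine difficulty is inherited from \cite{lokut}, and the present lemma is merely its modular repackaging.
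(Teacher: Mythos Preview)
Your proposal is correct and follows essentially the same route as the paper: invoke Proposition~\ref{mod} to pass from $f$ to a real-valued $r$-{\sc cnzf} $g$, apply Lemma~\ref{lokutllema} to obtain $S$ and $g_1$, then reduce modulo $r$ and reorient back to $D_0$. If anything, you spell out the orientation bookkeeping more carefully than the paper, which simply remarks that ``the orientation does not really matter'' and switches back to $D_0$ without further comment.
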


Clearly, the structure of each expansion graph for a vertex $v$
depends solely on the values of $f$ on the edges incident with
$v$. The existence of the expansion graphs $X$, $Y$, $Z$ and $W$
required for our construction, immediately follows.

Let us reemphasize that the converting procedure of \cite{lokut}
is not a part of our reduction algorithm and its details and
complexity are irrelevant. It is used here only as an existence
assertion, to save an explicit presentation of the four expansion
graphs.

$S(H)$ is constructed by replacing each vertex $x$, $y$, $z$ and
$w$, in every link of $G(H)$, by expansion graphs, $X$, $Y$, $Z$
and $W$, all in linear time.
\end{proof}

\subsection{Analogous results for $r \in (4,5)$}
As previously stated, using open capacities seems to be a must
when dealing with sub-$5$-flows. However, it may also come in
handy when smaller values of $r$ are considered.

The Petersen graph ${\cal P}_{10}$ is known to  admit an (integer)
$5$-NZF $f$, such that $f(uv)=4$ for exactly one edge $uv$, and
$f(e) \in \{1,2,3\}$ for all other edges $e$. That, combined with
$\phi_c({\cal P}_{10})=5$, $\phi_c({\cal P}^*_{10})=4$ and
Proposition \ref{mod}, lead to the following (which is also stated
in \cite{pan} for the closed capacity):

For every rational $r \in (4,5)$ the closed $r$-capacity of ${\cal
P}^*_{10}(u,v)$ is $[4,r-4]$ (modulo $r$) and the open
$r$-capacity of the same g-edge is the open interval $(4,r-4)$
with the same boundaries. Lemma 3.5 of \cite{pan} states the
existence of a collection of g-edges, among them one with  closed
$r$-capacity $[r-1,1]$. Let this g-edge be denoted here by $Q_r$.
Following the proof of Lemma 3.5 in \cite{pan}, $Q_r$ is
constructed by repeatedly applying serial and parallel joins,
starting with copies of ${\cal P}^*_{10}(u,v)$ and simple edges
(of closed $r$-capacity $[1,r-1]$ and open $r$-capacity
$(1,r-1)$).

In general, open and closed capacities cannot be switched
carelessly. See the discussion following
Definition~\ref{capacity}. However, the serial-parallel technique, as well as the set sum and
intersection operations, result in analogous outcomes when applied
to open and closed intervals with the same boundaries.
Accordingly, the open $r$-capacity of $Q_r$ is $(r-1,1)$.

Replacing the g-edge ${\cal P}^*_{10}$ in our constructions, by
$Q_r$, allows the generalization of many of our results, from
$r=5$, to $r \in (4,5]$. In particular, meaningful results can be
obtained when using (even if implicitly) the analogues of Lemma
\ref{measure2} and Corollaries \ref{odd} and \ref{odd1}, which
strongly rely on the involved capacities being open capacities.

For example
\begin{theorem}
For every rational $r \in (4,5)$,  deciding whether
$\phi_c(S)<r$, for an input snark $S$ is NP-Complete.
 \end{theorem}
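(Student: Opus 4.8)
The plan is to replay the two-stage argument of Lemma~\ref{NPC} and Theorem~\ref{snpc} almost verbatim, with every gadget replaced by its level-$r$ analogue, exploiting the fact recorded just above that serial/parallel joins and the set operations behave identically on open intervals with the same boundaries. The only place the modulus $5$ entered the $r=5$ proof was through the capacities of the building blocks, so I would first fix the level-$r$ substitutes. A simple edge keeps capacity $(1,r-1)$; the role of the $(4,1)$-edge $\mathcal{P}^*_{10}$ is taken by $Q_r$, whose open $r$-capacity is $(r-1,1)$. The parallel join of two copies of $Q_r$ has capacity $(r-1,1)+(r-1,1)=(r-2,2)$ (the analogue of $(3,2)$), and its serial join with a simple edge yields $(r-2,2)\cap(1,r-1)=(1,2)\cup(r-2,r-1)=:A_r$, the analogue of the measure-two capacity $(1,2)\cup(3,4)$ used for the node-cycles and connectors. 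At $r=5$ all of these specialize to the sets of the original construction.

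Rebuild $G(H)$ as $G_r(H)$: node-cycles and connectors are made of $A_r$-edges and triplet-cycles of six simple $(1,r-1)$-edges, exactly as before. The two implications then mirror Propositions~\ref{c2f} and~\ref{f2c}. For the yes-direction I would exhibit an explicit sub-$r$-{\sc mcnzf} in the $2$-colorable case: alternate the node-cycle values between a chosen $v_0\in(r-2,r-1)$ and $-v_0\in(1,2)$, which forces connector values $\pm 2v_0$; selecting $v_0\in\big((r+1)/2,(r+2)/2\big)\subseteq(r-2,r-1)$ places $2v_0$ back inside $A_r$, and the triplet-cycle edges then receive partial sums lying in $(1,r-1)$ just as the values $t,2t,3t$ did for $r=5$. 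For the no-direction, the analogue of Lemma~\ref{measure2} (which holds because for $A_r$ the difference of two values from the same sub-interval lands in $(r-1,1)$, disjoint from $(1,r-1)$) forces the node-cycle values to alternate between the two sub-intervals, hence pins the connectors at the positive terminals of each node into a fixed sub-interval; this is exactly the induced $2$-coloring, and a triplet all of whose three connector values fall in one sub-interval drives one of four consecutive triplet-cycle edges out of $(1,r-1)$. Together these give $\phi_c(G_r(H))<r$ if and only if $H$ is $2$-colorable.

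It then remains to turn $G_r(H)$ into a snark $S_r(H)$ with the flow number preserved across the threshold $r$. As before I would check that $G_r(H)$ is cyclically $4$-edge-connected of girth at least $5$ (using the thick $(1,4)$-edge and the simple-edge end of each connector), and that $\phi_c(G_r(H))>4$ regardless of $H$: this is a statement about modulo-$4$ flows only, decoupled from $r$, so the Petersen components hidden inside each $Q_r$ force the simple-edge part of every connector to carry flow $0$ modulo $4$, exactly as in the $r=5$ proposition, ruling out a $4$-{\sc mcnzf}. With $4<\phi_c(G_r(H))<5$ in hand whenever $H$ is $2$-colorable, Lemma~\ref{lokutmod} (stated precisely for this range) supplies fixed expansion graphs---depending only on the finitely many flow values taken on a single link, which are identical across all links---converting $G_r(H)$ into a snark $S_r(H)$ that admits a sub-$r$-{\sc mcnzf} extending the explicit flow. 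Combined with Proposition~\ref{exp} in the reverse direction, this yields $\phi_c(S_r(H))<r$ if and only if $\phi_c(G_r(H))<r$; since membership in NP is immediate (a sub-$r$-{\sc mcnzf} is a polynomial-size certificate), the reduction from $2$-colorability is complete.

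I expect the main obstacle to be verifying that the interval arithmetic for $A_r$ reproduces the exact logical skeleton of the $r=5$ argument \emph{uniformly} over the whole range $(4,5)$, rather than at the single value $r=5$. The delicate point is that the forbidden arc $(r-1,1)$ retains length $2$ for every $r$ (so that a single connector step, of length less than $2$, can never jump over it, which is what makes the triplet constraint bite), while the admissible arc $(1,r-1)$ shrinks to length $r-2$; one must confirm that this arc stays wide enough to carry the yes-instance flow and, at the same time, narrow enough that three same-sub-interval connector values necessarily force a violation. This is tightest as $r\to 4$, and checking it there---together with confirming that $A_r$ is genuinely graphic via the $Q_r$-based serial/parallel construction for all such $r$---is where the real work lies. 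The snark-conversion step, by contrast, is essentially inherited unchanged from Theorem~\ref{snpc}, precisely because Lemma~\ref{lokutmod} was already formulated for arbitrary $\phi_c\in(4,5)$.
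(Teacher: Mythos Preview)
Your proposal is correct and follows essentially the same route as the paper's own outline: replace every ${\cal P}^*_{10}$ by $Q_r$, use the resulting capacity $A_r=(1,2)\cup(r-2,r-1)$ for node-cycle edges and connectors, and rerun Propositions~\ref{c2f}--\ref{f2c} and the snark-conversion of Theorem~\ref{snpc} verbatim. The only points where the paper is slightly more explicit are the choice of the flow parameter (it takes $t=1+2\epsilon$ with $0<\epsilon<(r-4)/6$, which is exactly the narrowing of your interval for $v_0$ needed to force $3t<r-1$), the use of a \emph{thick} $(1,r-1)$-edge in the connectors, and the remark that $Q_r$ itself contributes additional high-degree vertices to each link---still finitely many, so the fixed-expansion-graph argument you invoke goes through unchanged.
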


\begin{proof} {\bf (outlines)}
The proof accurately follows those of Lemma \ref{NPC} and Theorem
\ref{snpc}, subject to the following adaptations:
\begin{itemize}
\item $Q_r$ replaces every ${\cal P}^*_{10}(u,v)$ (the \includegraphics[width=1cm]{pattern} shape in
Figures \ref{gedges} and \ref{link}) of $G(H)$. Accordingly:
\item Edges of the node-cycles and the connectors are
of $r$-capacity $(1,2)\cup(r-2,r-1)$
\item The $(1,r-1)$ component of a connector is a thick
$(1,r-1)$-edge (see Figure \ref{gedges}, top right).
\item The parameter $\epsilon$ selected for the definition of $f$
in the proof of Proposition \ref{c2f}, should satisfy $0 <
\epsilon <(r-4)/6$.
\item In addition to the four vertices $x,y,z$ and $w$, there are
other vertices $v$ of degree $d(v)>3$ within every $Q_r$ subgraph.
However, the set of such vertices in a link is still finite and so
is the set of required expansion graphs. With a finite fixed set
of expansion graphs, the construction of $S(H)$ from $G(H)$ is
still performed in linear time.

\item Every detail of the proofs of Lemma \ref{NPC} and Theorem
\ref{snpc}, straightforwardly translates into this modified
setting.
\end{itemize}
\end{proof}

\section{Concluding remarks, open problems and directions for
further research}\label{sec:ccl}

\subsection{How actually rich is $S_{\ge 5}$?}

As diverse as they may seem, snarks constructed in this article
still share certain restricting characteristics. They are all
based on the $(4,1)$-edge ${\cal P}^*_{10}(u,v)$, obtained from
the Petersen graph, and, as such, they all contain ${\cal
P}^*_{10}(u,v)$ subgraphs, and they all (other than ${\cal
P}_{10}$) are no more than cyclically $4$-edge-connected.

High edge-connectivity is known to be correlated with smaller flow
number, so the above may hold for every $S \in S_{\ge 5}$, or even
every graph $G \in F_{\ge 5}$. We hesitate to call it a
``conjecture" (to replace Mohar's Strong $5$-flow Conjecture
\cite{mohar}). It is, however,  definitely an intriguing question,
which we are not the first to post (e.g. \cite{lokut}):
\begin{problem}
Is there a cyclically $5$-edge-connected graph $G \in F_{\ge 5}$,
other than ${\cal P}_{10}$?
\end{problem}

Or at least: \begin{problem}
 Is there a graph $G \in F_{\ge 5}$ with no ${\cal P}^*_{10}(u,v)$
 induced subgraph, possibly with each of the two terminals $u$ and
 $v$ split into two pairs $(u_1,u_2)$ and $(v_1,v_2)$, due to
 vertex expansion?
 Here ${\cal P}_{10}$, the parallel join of ${\cal P}^*_{10}(u,v)$ and a simple
edge is no exception.
\end{problem}

Positive answers to either one of the above, or to both, would
mean that the borders of $F_{\ge 5}$ and $S_{\ge 5}$ are far
beyond those drawn by our methods.

Assuming that this is not the case, then every $G \in F_{\ge 5}$
 is only cyclically $4$-edge-connected and contains a
${\cal P}^*_{10}(u,v)$ subgraph. Does this mean that the
construction tools developed in Sections \ref{>=5} and
\ref{explicit} can generate every member of $F_{\ge 5}$ and
$S_{\ge 5}$?

This last question is not entirely well defined. We did not
present a systematic list of construction methods, but only
sporadically demonstrated some. It might be beneficial to try and
make such a list, and then to study how extensively the various
methods overlap. Is it possible to define, or to get close to  a
``basis" of independent (or almost independent) constructing
operations, which can be combined to produce all other
constructing methods?

 We have, however, systematically
analyzed the set of graphic $5$-capacities $\mbox{\it GI}_5$. Here a well
defined question is in place:
\begin{problem}
Does the list of $5$-capacities, presented in Section \ref{>=5}, include all members of
$\mbox{\it GI}_5$?
\end{problem}

A similar question can be asked with regards to $\mbox{\it GI}_3$
(and $\mbox{\it GI}_4$, though we suggested no list for that one).

\subsection{3-poles - The two dimensional case}
Similar to a $g$-edge, a {\bf 3-pole} is a ``network" $H$ with
three terminals - a source $u$ and two sinks $v$ and $w$. As part
of a graph $G$, a $3$-pole $H$ is a subgraph, which shares its
three terminals, and is otherwise vertex-disjoint, from the rest
of the graph. The (open or closed) capacity of a $3$-pole is the
set of pairs $(x,y)$ of flow values, which can be simultaneously
pushed from $u$ to $(v,w)$: That is, $x$ from $u$ to $v$ and $y$
from $u$ to $w$, subject to given restrictions on the flow values
on edges of $H$. The open $k$-capacity of a $3$-pole, would then
be, as a generalization of Definition \ref{capacity}, a set of
points $(x,y)$ on the torus $(\mathbb{R}/k\mathbb{Z})^2$, obtained
that way, where the flow values on the edges of $H$ are restricted
to $(1,k-1)$ modulo $k$. Similarly to Lemma \ref{cpk}, the open
$k$-capacity of a $3$-pole is a symmetric (with respect to the
origin point $(0,0)$) union of open convex integer polygons on
$(\mathbb{R}/k\mathbb{Z})^2$. An integer polygon is a polygon
whose vertices have two integer coordinates. Two $3$-poles which
share a common source $u$ and two distinct pairs of sinks,
$(v_1,w_1)$ and $(v_2,w_2)$ with $k$ capacities $A$ and $B$, can
be merged to form parallel and serial joins. The capacity of a
parallel join is $A+B$. The capacity of a serial join, however, is
not the intersection but the {\bf composition} $A\circ \{
(-y,z)\,|\, (y,z) \in B\}$ of $A$ and $\{ (-y,z)\,|\, (y,z) \in
B\}$ as binary relations in $\mathbb{R}/k\mathbb{Z}$. Neither the
parallel join, nor the serial is uniquely defined. We leave the
details for further research. Anyway, there are finitely many
potential $k$-capacites to generate from $k^2$ integer points in
$(\mathbb{R}/k\mathbb{Z})^2$. Definitely, if $k$ is small, the
mission is within the reach of a computer assisted comprehensive
study.

\subsection{Other applications to Nowhere-Zero flow problems} Closed capacities of
 g-edges  were very successfully applied in
\cite{pan} to the study of $r$-{\sc cnzf} where $r<5$. However,
the authors only considered the serial-parallel mechanism and
there was no attempt to characterize the set of all these
capacities, similar to what we did for the set $\mbox{\it GI}_5$
of all open $5$-capacities.  Systematic development of the subject
may lead to further applications in the study of Nowhere-Zero
Flows. The same holds for $3$-poles and multi-poles of higher
dimension with respect to any $r \in (4,5]$ with closed, as well
as open capacities.

\subsection{Regular matroids and graph coloring}
 Lemma \ref{cpk} can be stated and proved in the wider setting of
 totally unimodular integer programming. As such, it can be
 applied to flows in general regular matroids, rather than just graphs.
 Particularly interesting may be the co-graphic case, where the
 analogue of an $r$-{\sc cnzf} is a tension function, induced by a proper $r$-circular coloring of a
 graph. $k$-Co-capacities, may appear to be useful for the study
 of circular graph coloring (where $r$ is not bounded by $5$ or
 $6$ or any other upper bound).

\end{document}